\documentclass[11pt]{article}
\usepackage{geometry}                
\geometry{letterpaper}                   
\usepackage{graphicx}
\usepackage{enumerate}
\usepackage{setspace}
\usepackage{amssymb}
\usepackage{amsmath, amsthm}
\usepackage{amsmath}
\usepackage{amsthm}
\usepackage{blkarray}
\usepackage{epstopdf}
\usepackage{footnote}
\newtheorem{thm}{Theorem}

\newtheorem*{thma}{Theorem A}
\newtheorem*{thmb}{Theorem B}

\newtheorem{definition}{Definition}

\newtheorem{lemma}{Lemma}
\newtheorem{corollary}{Corollary}
\DeclareGraphicsRule{.tif}{png}{.png}{`convert #1 `dirname #1`/`basename #1 .tif`.png}

\begin{document}\large{
\title{Inverse Theorem on Row Sequences of Linear Pad\'e-orthogonal Approximation}
\author{N. Bosuwan\thanks{The research of this author is supported by The Development and Promotion of Science and Technology Talents project (DPST) and Mahidol University.}\,\,\,\footnote{Corresponding author.}  and  G. L\'opez Lagomasino\thanks{The research of this author is supported by Ministerio de Economía y Competitividad under grant MTM2012-36372-C03-01.}   }
\maketitle

\begin{center}
{\emph{Dedicated to Professor Edward B. Saff on the occasion of his 70th birthday}}
\end{center}


\section*{Abstract}
We give necessary and sufficient conditions for the convergence with geometric rate of the denominators of linear Pad\'e-orthogonal approximants corresponding to a measure supported on a general compact set in the complex plane. Thereby, we obtain an analogue of Gonchar's theorem on row sequences of Pad\'e approximants.

\quad

 \section*{Keywords}{Pad\'e approximation, Pad\'e-orthogonal approximation, orthogonal polynomials,  Fourier-Pad\'e approximation, inverse problems.}

 \section*{2000 MSC.}
Primary 30E10, 41A27; Secondary 41A21.

\section{Introduction}

In this paper, $E$ denotes a compact subset of the complex plane $\mathbb{C}$ which contains infinitely many points such that $\overline{\mathbb{C}}\setminus E$ is simply connected. There exists a unique exterior conformal representation $\Phi$ from $\overline{\mathbb{C}}\setminus E$ onto $\overline{\mathbb{C}}\setminus \{w: |w|\leq 1\}$ satisfying $\Phi(\infty)=\infty$ and $\Phi'(\infty)>0.$ Thus
\[ \Phi(z) = \textup{cap}(E) z + \mathcal{O}(1), \qquad z \to \infty.
\]
It is well known that the constant $\textup{cap}(E)$ coincides with the logarithmic capacity of the compact set $E$ (see \cite[pag. 313]{Gol}).
Furthermore, we assume that $E$ is such that the inverse function $\Psi =\Phi^{-1}$ can be extended continuously to $\overline{\mathbb{C}}\setminus \{w: |w|< 1\}$  (the closure of a bounded Jordan region and a finite interval satisfy the above  conditions).

Let $\mu$ be a finite positive Borel measure with infinite support $\mbox{supp}(\mu)$ contained in $E$. We write $\mu \in \mathcal{M}(E)$ and define the associated inner product,
$$\langle g,h \rangle_{\mu}:=\int g(\zeta) \overline{h(\zeta)} d\mu(\zeta), \quad g,h \in L_2(\mu).$$
Let
$$p_{n}(z):=\kappa_n z^n+\cdots, \quad  \kappa_n>0,\quad n=0,1,\ldots,$$ be the orthonormal polynomial of degree $n$ with respect to $\mu$ having positive leading coefficient; that is, $\langle p_n, p_m \rangle_{\mu}=\delta_{n,m}.$  Denote by $\mathcal{H}(E)$ the space of all functions holomorphic in some neighborhood of $E.$

\begin{definition}\textup{
Let $F\in {\mathcal{H}}(E),\, \mu \in \mathcal{M}(E),$ and a pair of nonnegative integers $(n,m)$ be given. A rational function $[n/m]_F^{\mu}:=P_{n,m}^{\mu}/Q_{n,m}^\mu$ is called an $(n,m)$ \emph{(linear) Pad\'e-orthogonal approximant}  of $F$ with respect to $\mu$ if $P_{n,m}^\mu$ and $Q_{n,m}^\mu$ are polynomials satisfying
\begin{equation}\label{pade2}
\deg(P_{n,m}^\mu) \leq n, \quad \deg(Q_{n,m}^{\mu})\leq m,\quad Q_{n,m}^{\mu}\not\equiv 0,
\end{equation}
\begin{equation}\label{pade3}
\langle Q_{n,m}^{\mu} F-P_{n,m}^{\mu}, p_j \rangle_{\mu}=0, \quad \textup{for $j=0,1,\ldots,n+m.$}
\end{equation}
Since $Q_{n,m}^{\mu} \not\equiv 0$, we normalize it to have leading coefficient equal to $1$.}
\end{definition}

Obviously, given $Q_{n,m}^{\mu}$,
\[ P_{n,m}^{\mu}(z) = \sum_{j=0}^m\langle {Q_{n,m}^{\mu}F},p_j\rangle p_j(x)
\]
is uniquely determined.

It is easy to see that if $E=\{z\in \mathbb{C}: |z|\leq 1\}$ and $d\mu=d\theta/2\pi$ on the unit circle $\{z\in \mathbb{C}:|z|=1\}$, then the linear Pad\'e-orthogonal approximants are exactly the classical Pad\'e approximants. The concept of linear Pad\'e-orthogonal approximants was first introduced by H.J. Maehly \cite{maehly1960rational} in 1960. In fact, he considered linear Pad\'e-orthogonal approximants only for the case when $d\mu=dx/\sqrt{1-x^2}$ on $[-1,1]$. These rational functions are called Pad\'e-Chebyshev approximants (see \cite{baker1996pade}) or sometimes cross-multiplied approximants (see \cite{fleischer1972generalizations}). Later, E.W. Cheney defined linear Pad\'e-orthogonal approximants in a general setting ($E$ is not just a finite interval) in his book \cite{cheney1966introduction}. The study of linear Pad\'e-orthogonal approximants has mainly concentrated on the case when $\mu$ is supported on a finite interval (see e.g. \cite{Vib2006,Vib2009,gonchar1991convergence,gonchar1992rate,lubinsky1983convergence,Sutinpade,suetin1997asymptotics}). S.P. Suetin \cite{Sutinpade} was the first to prove the convergence of row sequences of linear Pad\'e-orthogonal approximants on $[-1,1]$ for a general class of measures for which the corresponding sequence of orthonormal polynomials has ratio asymptotic behavior. Moreover, he also proved an inverse result \cite{suetininverse} for row sequences of linear Pad\'e-orthogonal approximants on $[-1,1]$ under the assumption that the denominators of the approximants converge with geometric rate to a certain polynomial of degree $m$.  For measures satisfying Szeg\H{o}'s condition, V.I. Buslaev \cite{Vib2006,Vib2009} obtained inverse type results without the requirement that the denominators converge geometrically. Some problems on the convergence of diagonal sequences of linear Pad\'e-orthogonal approximants on $[-1,1]$ were considered in \cite{gonchar1991convergence,gonchar1992rate,lubinsky1983convergence,suetin1997asymptotics}. Some papers which consider measures $\mu$ supported on the unit circle are \cite{BLS,pade10,Vib2006,Vib2009}.   N. Bosuwan, G. L\'opez Lagomasino, and E.B. Saff gave in \cite{bosuwan} direct and inverse results for row sequences of linear Pad\'e-orthogonal approximants corresponding to measures supported on a general compact $E$ as described above (which we will discuss in details below). Note that linear Pad\'e-orthogonal approximants have also been called linear Pad\'e approximants of orthogonal expansions \cite{suetin2002Surveys}, Fourier-Pad\'e approximants \cite{BLS,pade10,CCL0}, and orthogonal Pad\'e approximants \cite{Vib2006,Vib2009}.

We would like to point out that there is another related construction called nonlinear Pad\'e approximants of orthogonal expansions (see \cite{nonlinear}). Unlike the classical case, these linear and nonlinear Pad\'e approximants of orthogonal expansions lead, in general, to different rational functions (see an example in \cite{nonlinear}). We will restrict our attention in this paper to linear Pad\'e-orthogonal approximants,  and in the sequel we will omit the word ``linear'' when we refer to them.

Let us introduce some notation. For any $\rho>1,$  we denote by
 $$\Gamma_{\rho}:=\{z\in \mathbb{C}: |\Phi(z)|=\rho\}, \quad \quad \mbox{and} \quad \quad D_{\rho}:=E\cup \{z\in \mathbb{C}: |\Phi(z)|<\rho\},$$
 a level curve of index $\rho$ and a canonical domain of index $\rho$, respectively.
We denote by $\rho_0(F)$ the index $\rho>1$ of the largest canonical domain $D_{\rho}$ to which $F$ can be extended as a holomorphic function, and by ${\rho_m(F)}$ the index $\rho$ of the largest canonical domain $D_{\rho}$ to which $F$ can be extended as a meromorphic function with at most $m$ poles (counting multiplicities).

Let $\mu \in \mathcal{M}(E)$ be such that
\begin{equation}\label{radius3}
\lim_{n \rightarrow \infty} |p_n(z)|^{1/n}=|\Phi(z)|,
\end{equation}
uniformly inside $\mathbb{C}\setminus E.$ Such measures are called \textit{regular} (cf.  \cite{totik}).  Here and in what follows, the phrase ``uniformly inside a domain'' means ``uniformly on each compact subset of the domain''.  The Fourier coefficient of $F$ with respect to $p_n$ is given by
\begin{equation}\label{Fourierco}
F_n:=\langle F,p_n\rangle_\mu =\int F(z) \overline{p_n(z)}d\mu(z).
\end{equation}
As for Taylor series (see, for example, \cite[Theorem 6.6.1]{totik}),  it is easy to show that
$$\rho_0(F)=\left(\varlimsup_{n \rightarrow \infty} |F_n|^{1/n} \right)^{-1}.$$
Additionally, the series $\sum_{n=0}^{\infty} F_n p_n(z)$ converges to $F(z)$ uniformly inside  ${D}_{\rho_{0}(F)}$ and diverges pointwise for all $z\in \mathbb{C}\setminus \overline{D_{\rho_0(F)}}.$
Therefore,
if \eqref{radius3} holds, then $$Q_{n,m}^{\mu}(z)F(z)-P_{n,m}^{\mu}(z)=\sum_{k=n+m+1}^{\infty} \langle Q_{n,m}^{\mu}F,p_k\rangle_{\mu}\,p_k(z)$$
for all $z\in D_{\rho_{0}(F)}$.

We showed in \cite[Example 1]{bosuwan} that $[n/m]_F^{\mu}$ is not unique in general. However, if $\mu$ satisfies the condition
\begin{eqnarray}\label{averbvkaer}
\Delta_{n,m}(F,\mu):=
\begin{vmatrix}
  \langle F, p_{n+1} \rangle_{\mu} &  \langle zF,p_{n+1}\rangle_{\mu} &  \cdots & \langle z^{m-1}F,p_{n+1}\rangle_{\mu} \\
   \vdots   & \vdots    & \vdots &  \vdots  \\
  \langle F, p_{n+m} \rangle_{\mu} & \langle z F,p_{n+m} \rangle_{\mu} &  \cdots &   \langle z^{m-1} F,p_{n+m}\rangle_{\mu} \\
 \end{vmatrix}\not=0
 \end{eqnarray}
or the condition that every solution of (\ref{pade2})-(\ref{pade3}) has $\deg Q_{n,m}^{\mu} = m$, then $[n/m]_F^{\mu}$ is unique.
Moreover, it is not difficult to verify that these two conditions are equivalent.

Let us introduce two classes of measures contained in $\mathcal{M}(E)$ which are relevant in what follows. We write $\mu \in \mathcal{R}(E)$ when the corresponding sequence of orthonormal polynomials has ratio asymptotics; that is,
\begin{equation}\label{ratioasym}
\lim_{n \rightarrow \infty} \frac{p_n(z)}{p_{n+1}(z)}=\frac{1}{\Phi(z)}.
\end{equation}
We say that Szeg\H{o} or strong asymptotics takes place, and write $\mu \in \mathcal{S}(E)$, if
\begin{equation}\label{szegoasym}
\lim_{n \rightarrow \infty} \frac{p_n(z)}{c_n \Phi^{n}(z)}=S(z) \quad \textup{and} \quad  \lim_{n\rightarrow \infty} \frac{c_n}{c_{n+1}}=1.
\end{equation}
The first limit in (\ref{szegoasym}) and the one in (\ref{ratioasym}) are assumed to hold uniformly inside  $\overline{\mathbb{C}}\setminus E$, the $c_n$'s are positive constants, and
$S(z)$ is a non-vanishing holomorphic function on $\overline{\mathbb{C}}\setminus E.$  Clearly, \eqref{szegoasym} $\Rightarrow$ \eqref{ratioasym} $\Rightarrow$  \eqref{radius3}.

These two classes of measures have been well studied when the measure $\mu$ is supported on an interval of the real line or the whole unit circle (see, for example \cite{Simon1} and \cite{Simon2}) and characterized in terms of the analytic properties of the measure or of the corresponding sequences of recurrence coefficients (in case of the real line) or the Verblunsky coefficients (for the unit circle).  For general compact sets $E$ contained in the complex plane the situation is not quite the same. There are many examples for which Szeg\H{o} asymptotics takes place for measures supported on a single Jordan curve or arc (see \cite{Kaliaguine1995,Erwin2009,PKSuetin,Szego1921,Szego1975,Widom1969})
 and polynomials orthogonal with respect to area type measures on a Jordan region (see \cite{Carleman,Erwin2008,Erwin2010,Stylianopoulos2013,PKSuetin1}). Outside of the previously mentioned cases of the segment and the unit circle, the only case fully described and easily verifiable where $\mathcal{R}(E)$ is substantially larger than $\mathcal{S}(E)$ is when $E$ is an arc of the unit circle, see \cite[Theorem 1]{Ba} and \cite[Theorem 1]{Be}. An interesting problem is to describe general measures in $\mathcal{R}(E)$ not necessarily in  $\mathcal{S}(E)$, for different compact sets $E$.

In \cite{bosuwan}, direct and inverse results for row sequences of Pad\'e-orthogonal approximants corresponding to a measure supported on a general compact set $E$  were proved. An analogue of Montessus de Ballore's theorem (direct result) for Pad\'e-orthogonal approximants is the following.

\begin{thma}\label{montessusana}
 Suppose $F\in \mathcal{H}(E)$ has poles of total multiplicity exactly $m$ in $D_{\rho_{m}(F)}$ at the (not necessarily distinct) points $\lambda_1,\ldots,\lambda_m$ and let $\mu \in \mathcal{R}(E)$. Then, $[n/m]_{F}^{\mu}$ is uniquely determined for all sufficiently large $n$ and the sequence $[n/m]_{F}^{\mu}$ converges uniformly to $F$ inside $D_{\rho_m(F)}\setminus \{\lambda_1,\ldots,\lambda_m\}$ as $n \rightarrow \infty.$ Moreover, for any compact subset $K$ of $D_{\rho_m(F)}\setminus\{\lambda_1,\ldots,\lambda_m\},$
\begin{equation}\label{supnroot}
\varlimsup_{n\rightarrow \infty} \|F-[n/m]_{F}^{\mu}\|^{1/n}_{K}\leq \frac{\max\{|\Phi(z)|:z\in K\}}{\rho_m(F)},
\end{equation}
where $\|\cdot \|_{K}$ denotes the sup-norm on $K$ and if $K\subset E,$ then $\max\{|\Phi(z)|:z\in K\}$ is replaced by $1.$ Additionally,
\begin{equation} \label{convzeros}
\varlimsup_{n\rightarrow \infty} \|Q_{n,m}^{\mu} - Q_m\|^{1/n} \leq \frac{\max\{|\Phi(\lambda_j)|:j=1,\ldots,m\}}{\rho_m(F)}<1,
\end{equation}
where $\|\cdot\|$ denotes the coefficient norm in the space of polynomials of degree at most $m$ and $Q_m(z) = \prod_{k=1}^{m} (z - \lambda_k)$.
\end{thma}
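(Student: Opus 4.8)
The plan is to reduce the problem to an asymptotic analysis of the linear system that defines the denominator $Q_{n,m}^\mu$, and then to transfer the information obtained about $Q_{n,m}^\mu$ to the approximant itself. Write $Q_m(z)=\prod_{k=1}^m(z-\lambda_k)$ and set $g:=Q_mF$; since $F$ is meromorphic in $D_{\rho_m(F)}$ with exactly the poles $\lambda_1,\dots,\lambda_m$ there, $g$ is holomorphic in $D_{\rho_m(F)}$ with a singularity on $\Gamma_{\rho_m(F)}$, so that $\rho_0(g)=\rho_m(F)$. I would first record the orthogonality relations: because $\deg P_{n,m}^\mu\le n$, the conditions (\ref{pade3}) with $j=n+1,\dots,n+m$ read $\langle Q_{n,m}^\mu F,p_j\rangle_\mu=0$. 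Writing $Q_{n,m}^\mu=Q_m+R_n$ with $\deg R_n\le m-1$ and using $\langle Q_mF,p_j\rangle_\mu=\langle g,p_j\rangle_\mu=:g_j$, these become the square linear system
\[
\sum_{i=0}^{m-1} b_{n,i}\,\langle z^iF,p_j\rangle_\mu=-g_j,\qquad j=n+1,\dots,n+m,
\]
in the coefficients $b_{n,i}$ of $R_n$, where $\varlimsup_j|g_j|^{1/j}=1/\rho_m(F)$.

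The heart of the argument is the asymptotics of the matrix entries $\langle z^iF,p_j\rangle_\mu$. Introduce the second-kind functions $s_j(w):=\int \overline{p_j(t)}/(w-t)\,d\mu(t)$, holomorphic in $\overline{\mathbb C}\setminus\mathrm{supp}(\mu)$, and use the Cauchy representation $\langle z^iF,p_j\rangle_\mu=\frac1{2\pi i}\oint_{\gamma} z^iF(z)s_j(z)\,dz$ for a small contour $\gamma$ around $\mathrm{supp}(\mu)$ enclosing no pole. Deforming $\gamma$ out to $\Gamma_r$ with $r<\rho_m(F)$ close to $\rho_m(F)$ and collecting residues at the $\lambda_k$ gives
\[
\langle z^iF,p_j\rangle_\mu=\frac1{2\pi i}\oint_{\Gamma_r} z^iF(z)s_j(z)\,dz-\sum_{k}\mathrm{Res}_{z=\lambda_k}\!\big[z^iF(z)s_j(z)\big].
\]
The contour term is $O(r^{-j})$, while each residue carries the factor $s_j(\lambda_k)$ (and, for multiple poles, its derivatives). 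Thus, once I establish that $\mu\in\mathcal R(E)$ forces ratio asymptotics $s_{j+1}(w)/s_j(w)\to 1/\Phi(w)$ uniformly on compact subsets of $\overline{\mathbb C}\setminus E$ — equivalently $|s_j(w)|^{1/j}\to 1/|\Phi(w)|$ with a genuine ratio — the residue terms dominate (because $|\Phi(\lambda_k)|<\rho_m(F)$), and the matrix $N_n:=[\langle z^iF,p_{n+l}\rangle_\mu]_{l,i}$ factors, to leading order, as a (confluent) Vandermonde in the $\lambda_k$ times a diagonal matrix built from the $s_n(\lambda_k)$. Establishing this ratio asymptotics for $s_j$ from (\ref{ratioasym}) is the step I expect to be the main obstacle, since on $\mathrm{supp}(\mu)$ the factor $\overline{p_j(t)}$ does not decay and one must extract the decay of $s_j$ from cancellation in the integral.

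Granting this, the nonvanishing of the Vandermonde factor yields $\det N_n\neq 0$ for all large $n$, which is condition (\ref{averbvkaer}) and hence the uniqueness of $[n/m]_F^\mu$; I would make $N_n$ invertible rigorously by a Neumann-series perturbation of its dominant residue part, the contour remainder being of smaller order $\rho_m(F)^{-n}$. Moreover $\|N_n^{-1}\|\asymp(\max_k|\Phi(\lambda_k)|)^{\,n}$, while the right-hand side satisfies $\|(-g_{n+1},\dots,-g_{n+m})\|\asymp\rho_m(F)^{-n}$; solving and taking $n$-th roots gives
\[
\varlimsup_{n\to\infty}\|Q_{n,m}^\mu-Q_m\|^{1/n}\le\frac{\max_k|\Phi(\lambda_k)|}{\rho_m(F)}=:\theta<1,
\]
which is (\ref{convzeros}).

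Finally, for the convergence and the rate (\ref{supnroot}) I would write $F-[n/m]_F^\mu=(Q_{n,m}^\mu F-P_{n,m}^\mu)/Q_{n,m}^\mu$ and control the numerator $H_n:=Q_{n,m}^\mu F-P_{n,m}^\mu=\sum_{k>n+m}\langle Q_{n,m}^\mu F,p_k\rangle_\mu\,p_k$. Using the Cauchy-kernel expansion $\sum_{k\ge0}s_k(w)p_k(z)=1/(w-z)$, valid for $|\Phi(z)|<|\Phi(w)|$, the tail $\sum_{k>n+m}s_k(w)p_k(z)$ is geometrically small, of order $(|\Phi(z)|/|\Phi(w)|)^{n}$; substituting the residue representation of $\langle Q_{n,m}^\mu F,p_k\rangle_\mu=\langle (Q_{n,m}^\mu/Q_m)g,p_k\rangle_\mu$ converts $H_n(z)$ into a contour integral over $\Gamma_r$ plus residues at the $\lambda_k$ weighted by $Q_{n,m}^\mu(\lambda_k)=R_n(\lambda_k)=O(\theta^n)$. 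Since $Q_{n,m}^\mu\to Q_m$ uniformly and $Q_m$ is bounded away from $0$ on any compact $K\subset D_{\rho_m(F)}\setminus\{\lambda_1,\dots,\lambda_m\}$, dividing by $Q_{n,m}^\mu$ and letting $r\to\rho_m(F)$ yields $\varlimsup_n\|F-[n/m]_F^\mu\|_K^{1/n}\le\max\{|\Phi(z)|:z\in K\}/\rho_m(F)$, with the bound $1/\rho_m(F)$ when $K\subset E$. The delicate point here is that for $z\in K$ with $|\Phi(z)|>|\Phi(\lambda_k)|$ the kernel identity fails at $w=\lambda_k$, so I would choose the contour $\Gamma_r$ and organize the residue bookkeeping relative to the position of $z$, splitting $K$ along the level curves $\Gamma_{|\Phi(\lambda_k)|}$ if necessary.
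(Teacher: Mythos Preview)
This paper does not actually contain a proof of Theorem~A; the result is quoted from \cite{bosuwan} and used here as an ingredient in the proof of Theorem~\ref{inversetheorem}. So there is no in-paper argument to compare your proposal against directly.

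That said, your outline is essentially the route taken in \cite{bosuwan}, and the tools it relies on are exactly those this paper records. In particular, the step you single out as the ``main obstacle'' --- the ratio asymptotics $s_{j+1}(w)/s_j(w)\to 1/\Phi(w)$ for the second-kind functions --- is not an obstacle at all: it is Lemma~\ref{secondtype} here (Lemma~3.1 of \cite{bosuwan}), obtained from $p_ns_n\to\Phi'/\Phi$ combined with $p_n/p_{n+1}\to 1/\Phi$, and it immediately gives both the $n$-th root asymptotics \eqref{banana10} and the nonvanishing of $s_n$ on compacta of $\mathbb C\setminus E$ that you need for the residue/Vandermonde analysis of the system matrix $N_n$.

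Two small points worth tightening. First, writing $Q_{n,m}^\mu=Q_m+R_n$ with $\deg R_n\le m-1$ presupposes $\deg Q_{n,m}^\mu=m$, which is equivalent to $\Delta_{n,m}(F,\mu)\ne 0$; the cleaner order is to prove $\det N_n\ne 0$ first (your confluent-Vandermonde plus Neumann perturbation does this) and only then pass to the monic normalization. Second, the kernel identity $\sum_{k\ge 0}s_k(w)p_k(z)=1/(w-z)$ is the Fourier expansion of $1/(w-\cdot)$ in the orthonormal system $\{p_k\}$ and is valid for $z\in D_{|\Phi(w)|}$ when $\mu$ is regular; your concern about points $z$ with $|\Phi(z)|>|\Phi(\lambda_k)|$ is handled more simply by first proving \eqref{convzeros}, so that $Q_{n,m}^\mu(\lambda_k)\to 0$ geometrically, and then representing each tail coefficient $\langle Q_{n,m}^\mu F,p_k\rangle_\mu$ by the same contour-plus-residue formula you used for the matrix entries: the residues at the $\lambda_k$ are killed by the factor $Q_{n,m}^\mu(\lambda_k)$ (or its derivatives), and only the $\Gamma_r$ contribution survives, giving the rate \eqref{supnroot} after letting $r\to\rho_m(F)$. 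No splitting of $K$ along level curves is needed.
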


In the same paper \cite{bosuwan}, an inverse type result in the spirit of Suetin's theorem in  \cite{sue2} was also obtained. It states
\begin{thmb}\label{suetinana}
Let $F \in \mathcal{H}(E),$ $\mu \in \mathcal{S}(E)$, and  $m$ be a fixed nonnegative integer. If for all $n$ sufficiently large, $[n/m]_F^{\mu}$ has precisely $m$ finite poles $\lambda_{n,1}, \ldots, \lambda_{n,m},$ and
\begin{equation}
\lim_{n \rightarrow \infty} \lambda_{n,j}= \lambda_j, \quad j=1,2,\ldots,m,\notag
\end{equation}
($\lambda_1,\ldots, \lambda_m$ are not necessarily distinct), then
\begin{enumerate}
\item[$(i)$] $F$ is holomorphic in $D_{\rho_{\min}}$ where $\rho_{\min}:=\min_{1\leq j \leq m}|\Phi(\lambda_j) |$\textup{;}
\item[$(ii)$] $\rho_{m-1}(F)=\max_{1\leq j \leq m} |\Phi(\lambda_j)|$\textup{;}
\item[$(iii)$] $\lambda_1, \ldots, \lambda_m$ are singularities of $F;$ those lying in $D_{\rho_{m-1}(F)}$ are poles (counting multiplicities), and $F$ has no other poles in $D_{\rho_{m-1}(F)}$.
\end{enumerate}
\end{thmb}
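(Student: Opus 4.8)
The plan is to transfer the statement, by means of the strong asymptotics available for $\mu\in\mathcal{S}(E)$, to a purely scalar problem about the Fourier coefficients $\{F_k\}$ and the Hankel-type determinants built from them, and then to invoke the Gonchar-type lemma (Lemma 5) together with the Buslaev lemmas (Lemmas 2 and 3) exactly as in the classical inverse theorem for row sequences of Pad\'e approximants. The guiding dictionary is $z\leftrightarrow w=\Phi(z)$, $p_k\leftrightarrow w^k$, and $\lambda_j\leftrightarrow\Phi(\lambda_j)$, under which Pad\'e--orthogonal approximation on $E$ becomes classical Pad\'e approximation of an associated series and $|\Phi(\lambda_j)|$ plays the role that $|\lambda_j|$ plays in the unit disk; this is why the thresholds in $(i)$--$(iii)$ are expressed through $|\Phi(\lambda_j)|$.

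First I would write the defining relations as a linear system. Since $\deg P_{n,m}^\mu\le n$, the relations \eqref{pade3} with $j=n+1,\dots,n+m$ reduce to
\begin{equation*}
\sum_{i=0}^{m}b_{n,i}\,\langle z^iF,p_{n+j}\rangle_\mu=0,\qquad j=1,\dots,m,
\end{equation*}
where $Q_{n,m}^\mu(z)=\sum_{i=0}^m b_{n,i}z^i$ with $b_{n,m}=1$; by Cramer's rule each $b_{n,i}$ is a ratio of $m\times m$ determinants in the entries $\langle z^iF,p_{n+j}\rangle_\mu$, and, for monic polynomials of fixed degree, the hypothesis $\lambda_{n,j}\to\lambda_j$ is equivalent to $b_{n,i}\to q_i$, where $Q_m(z)=\prod_{k}(z-\lambda_k)=\sum_i q_iz^i$. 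The key reduction is that for fixed $i$, as $k\to\infty$,
\begin{equation*}
\langle z^iF,p_k\rangle_\mu=\frac{\kappa_{k-i}}{\kappa_k}\,F_{k-i}+\sum_{l>k-i}F_l\,\langle z^ip_l,p_k\rangle_\mu,
\end{equation*}
because $z^ip_l$ is a polynomial of degree $l+i$ and is therefore orthogonal to $p_k$ whenever $l<k-i$. The coefficient $\kappa_{k-i}/\kappa_k$ tends to a positive power of $\mathrm{cap}(E)$ by \eqref{szegoasym}, while $|\langle z^ip_l,p_k\rangle_\mu|\le(\max_{z\in E}|z|)^{\,i}$ by Cauchy--Schwarz; together with the geometric decay $|F_l|^{1/l}\to 1/\rho_0(F)$ this shows that the sum is dominated, to leading order, by the single term $(\kappa_{k-i}/\kappa_k)F_{k-i}$.

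The step I expect to be the main obstacle is to upgrade this leading-order heuristic into estimates robust enough to control the determinant ratios. The difficulty is that the ratio $\langle z^iF,p_k\rangle_\mu/F_{k-i}$ need not converge: along indices where $|F_{k-i}|$ dips well below its upper limit the tail $\sum_{l>k-i}$ can be comparable to the main term. This is precisely where $\mu\in\mathcal{S}(E)$ is needed rather than the weaker $\mu\in\mathcal{R}(E)$ that sufficed for the direct Theorem A: the strong asymptotics $p_k/(c_k\Phi^k)\to S$, valid uniformly on $\overline{\mathbb{C}}\setminus E$, together with the boundary behaviour of $S$ and $\Phi$, yield an asymptotic expansion of $\langle z^iF,p_k\rangle_\mu$ whose error terms are small enough that the $m\times m$ determinants formed from $\langle z^iF,p_{n+j}\rangle_\mu$ are comparable, up to known nonzero multiplicative factors, to the corresponding Hankel determinants of the scalar sequence $\{F_k\}$. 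Carrying this comparison through uniformly, so that it survives the divisions in Cramer's rule, is the technical heart of the argument.

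Once the determinants attached to $\{\langle z^iF,p_{n+j}\rangle_\mu\}$ are identified with those of $\{F_k\}$, the remaining analysis is the classical one and is delivered by Lemma 5 and Lemmas 2 and 3. Part $(i)$ emerges from the same reduction: the Gonchar lemma yields $\rho_0(F)\ge\rho_{\min}$, in accordance with the fact that every pole of $[n/m]_F^\mu$ stays at $|\Phi|\ge\rho_{\min}$. For $(ii)$ and $(iii)$ I would pass to $G:=Q_mF$: since $q_m=b_{n,m}=1$, subtracting the vanishing relations above gives, for $j=1,\dots,m$,
\begin{equation*}
\langle G,p_{n+j}\rangle_\mu=\sum_{i=0}^{m-1}(q_i-b_{n,i})\,\langle z^iF,p_{n+j}\rangle_\mu,
\end{equation*}
so these $m$ consecutive Fourier coefficients of $G$ are smaller by the vanishing factors $q_i-b_{n,i}$; fed through the scalar reduction into the Gonchar and Buslaev lemmas this yields $\varlimsup_k|\langle G,p_k\rangle_\mu|^{1/k}=1/\rho_{\max}$, where $\rho_{\max}:=\max_{1\le j\le m}|\Phi(\lambda_j)|$, i.e. $Q_mF$ extends holomorphically to $D_{\rho_{\max}}$ and no farther. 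Since $Q_m$ is a polynomial, $F=G/Q_m$ is then meromorphic in $D_{\rho_{\max}}$ with poles only among the zeros of $Q_m$, which gives $\rho_{m-1}(F)=\rho_{\max}$ and the pole statement in $(iii)$; finally, a non-degeneracy argument---if some $\lambda_j$ were a removable point of $F$, the approximants could not sustain a pole converging to it---shows that each $\lambda_j$ is a genuine singularity and that $F$ has no poles in $D_{\rho_{m-1}(F)}$ other than the $\lambda_j$.
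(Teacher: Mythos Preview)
The paper does not contain a proof of Theorem~B. This result is quoted from the earlier paper \cite{bosuwan} (Bosuwan, L\'opez Lagomasino, and Saff), as the authors state explicitly in the introduction: ``In the same paper \cite{bosuwan}, an inverse type result in the spirit of Suetin's theorem in \cite{sue2} was also obtained. It states\ldots'' and then Theorem~B is simply displayed without proof. The present paper proves only Theorem~\ref{inversetheorem}, which is a different (though related) inverse result under the weaker hypothesis $\mu\in\mathcal{R}(E)$ but the stronger hypothesis of geometric convergence of the denominators. Consequently there is no proof in this paper against which to compare your proposal.

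That said, your outline is in the right spirit for how such results are typically proved and is consistent with the machinery cited in the paper's preamble (the Gonchar lemma \cite{Gonchar1} and the Buslaev lemmas \cite{Vib2006,Vib2009}, which appear as named but unused theorem environments \texttt{thmc1}--\texttt{thmc4}). Your identification of the main obstacle---controlling the determinant ratios when individual Fourier coefficients $F_{k-i}$ can dip far below their $\limsup$ size, so that the tail terms are not uniformly negligible---is exactly the point where strong asymptotics $\mu\in\mathcal{S}(E)$ is genuinely required, and this is where the actual work in \cite{bosuwan} lies. If you want to verify your proposal against the original argument, you should consult \cite{bosuwan} directly rather than the present paper.
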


In this paper, we prove a reciprocal of Theorem A for row sequences of Pad\'e-orthogonal approximants (see Theorem \ref{inversetheorem} below). As compared with Theorem B we must relax the condition on the measure to $\mu \in \mathcal{R}(E)$. To compensate, we will assume that the poles of the approximants converge with geometric rate as in \eqref{convzeros}. In contrast with Theorem B, we find that all the zeros of $Q_m$ are poles of $F$ and they all lie in $D_{\rho_m(F)}$.
Combining Theorem \ref{inversetheorem}  and Theorem A we obtain Corollary \ref{bana345} which characterizes the situation when $F$ has exactly $m$ poles in $D_{\rho_m(F)}$ (counting multiplicities) in terms of the exact rate of convergence in \eqref{convzeros}.  This corollary is an analogue of Gonchar's theorem  for row sequences of classical Pad\'e approximants (see e.g.  in \cite[Sect. 1]{CCL},  \cite[Sect. 3, Sect. 4]{gonchar1975convergence} or \cite[Sect. 2]{gonchar2011}).

An outline of the paper is as follows. In Sect.\,2, we state the main theorem and its corollary. All auxiliary lemmas are in Sect.\! 3. Sect. \!\!4 is devoted to the proof of the main result.


\section{Main results}


The main theorem is the following.
\begin{thm}\label{inversetheorem}
Let $F\in \mathcal{H}(E),$ $\mu\in \mathcal{R}(E),$ and $m$ be a fixed nonnegative integer. Assume that for all $n$ sufficiently large, $[n/m]_F^{\mu}$ has exactly $m$ finite poles and there exists a polynomial $Q_m(z)=\prod_{j=1}^m(z-\lambda_j)$ such that
\begin{equation}\label{banana51}
\varlimsup_{n\rightarrow \infty}\|Q_{n,m}^{\mu}-Q_m\|^{1/n} = \delta < 1.
\end{equation}
Then
\begin{equation}\label{banana75}
\rho_m(F) \geq \frac{1}{\delta} \max_{1\leq j \leq m} |\Phi(\lambda_j)|
\end{equation}
and in $D_{\rho_m(F)},$ the function $F$ has exactly $m$ poles at the points $\lambda_1,\ldots, \lambda_m.$
\end{thm}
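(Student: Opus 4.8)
The plan is to convert the geometric convergence \eqref{banana51} of the denominators into a decay estimate for the Fourier coefficients of $Q_mF$, deduce that $Q_mF$ continues holomorphically past the points $\lambda_j$, and then sharpen the rate to the value $L/\delta$, where $L:=\max_{1\le j\le m}|\Phi(\lambda_j)|$. First I would exploit \eqref{pade3}: since $\deg P_{n,m}^{\mu}\le n$ we have $\langle P_{n,m}^{\mu},p_k\rangle_\mu=0$ for $k>n$, so the coefficients $A_k:=\langle Q_mF,p_k\rangle_\mu$ of $G:=Q_mF$ satisfy
\[
A_k=\langle (Q_m-Q_{n,m}^{\mu})F,p_k\rangle_\mu,\qquad n<k\le n+m.
\]
Because $Q_m$ and $Q_{n,m}^{\mu}$ are monic of degree $m$, the difference $Q_m-Q_{n,m}^{\mu}$ has degree $\le m-1$ and, by \eqref{banana51}, coefficient norm comparable to $\delta^{n}$; since each $z^iF\in\mathcal H(E)$ has the same domain of holomorphy as $F$, its coefficients $\langle z^iF,p_k\rangle_\mu$ decay like $\rho^{-k}$ for every $\rho<\rho_0(F)$. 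Taking $k=n+1$ (which ranges over all large indices as $n$ varies) gives $\varlimsup_k|A_k|^{1/k}\le\delta/\rho_0(F)$. Hence $G$ is holomorphic in $D_{\rho_0(F)/\delta}$, so $F=G/Q_m$ is meromorphic there with poles only among the zeros of $Q_m$; writing $\sigma:=\rho_0(Q_mF)$ we obtain $\rho_m(F)\ge\sigma>1$, with $F$ meromorphic in $D_\sigma$ and its poles contained in $\{\lambda_1,\dots,\lambda_m\}$.

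The heart of the matter is to upgrade the coarse bound $\sigma\ge\rho_0(F)/\delta$ to the sharp inequality $\sigma\ge L/\delta$. For this I would bring in the functions of the second kind $s_k(z):=\int \overline{p_k(\zeta)}/(z-\zeta)\,d\mu(\zeta)$, whose asymptotics $|s_k(z)|^{1/k}\to 1/|\Phi(z)|$ uniformly on compact subsets of $\overline{\mathbb C}\setminus E$ (for $\mu\in\mathcal R(E)$) are among the auxiliary lemmas of Section~3. Applying the Cauchy/residue representation of Fourier coefficients to the identity $\langle Q_{n,m}^{\mu}F,p_k\rangle_\mu=0$, with a contour $\Gamma_\rho$ for $\rho$ slightly below $\sigma$ (so that all poles of $F$ in $D_\sigma$ lie inside), yields for $k=n+1,\dots,n+m$ a linear system
\[
\sum_{l}\alpha_l\,Q_{n,m}^{\mu}(\lambda_l)\,s_k(\lambda_l)=\frac{1}{2\pi i}\oint_{\Gamma_\rho}Q_{n,m}^{\mu}(\zeta)F(\zeta)s_k(\zeta)\,d\zeta ,
\]
with the evident modification through derivatives of $s_k$ when the $\lambda_l$ are not simple, where $\alpha_l$ encodes the principal part of $F$ at $\lambda_l$. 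After factoring out $\Phi(\lambda_l)^{-n}$, the coefficient matrix $[s_{n+j}(\lambda_l)]_{j,l}$ is to leading order a Vandermonde matrix in the distinct nodes $\Phi(\lambda_l)^{-1}$, hence invertible; this is what converts size information about the right-hand side into rate information about the numbers $Q_{n,m}^{\mu}(\lambda_l)$.

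From here the argument would close as follows. The right-hand integrals are $O(\rho^{-k})$, so inverting the system gives $\varlimsup_n|Q_{n,m}^{\mu}(\lambda_l)|^{1/n}\le|\Phi(\lambda_l)|/\sigma$; since $\|Q_{n,m}^{\mu}-Q_m\|\asymp\max_l|Q_{n,m}^{\mu}(\lambda_l)|$, this merely recovers $\delta\le L/\sigma$, i.e. the easy bound $\sigma\le L/\delta$ consistent with Theorem~A. The reverse inequality is the step I expect to be the main obstacle: I would show that the right-hand integrals cannot decay faster than $\sigma^{-k}$, for otherwise the same representation would continue $G=Q_mF$ holomorphically across $\Gamma_\sigma$, contradicting $\sigma=\rho_0(Q_mF)$. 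Combined with the invertibility of the Vandermonde system this forces $\varlimsup_n|Q_{n,m}^{\mu}(\lambda_{l^*})|^{1/n}\ge L/\sigma$ at the index with $|\Phi(\lambda_{l^*})|=L$, whence $\delta\ge L/\sigma$ and therefore $\rho_m(F)\ge\sigma\ge L/\delta$, which is \eqref{banana75}. The delicate point throughout is keeping the two competing scales — the contour rate $\delta/\rho$ and the pole rates $\delta_l/|\Phi(\lambda_l)|$ — correctly separated, since the crude estimate of the first paragraph is governed by the \emph{nearest} pole while the sharp constant is governed by the \emph{farthest} one.

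Finally, the same non-degeneracy simultaneously yields that every $\alpha_l\neq0$, i.e. that each $\lambda_j$ is a genuine pole of $F$: were some $\lambda_{j}$ not a pole, $F$ would have fewer than $m$ poles in $D_\sigma$, and the convergence of $Q_{n,m}^{\mu}$ to a degree-$m$ polynomial with all $m$ roots converging to fixed points would produce a spurious pole of the approximants incompatible with the uniqueness hypothesis $\Delta_{n,m}(F,\mu)\neq0$ and with Theorem~A. Together with the holomorphy of $Q_mF$ in $D_\sigma$, this gives the remaining conclusion that in $D_{\rho_m(F)}$ the function $F$ has exactly $m$ poles, located precisely at $\lambda_1,\dots,\lambda_m$.
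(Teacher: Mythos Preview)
Your opening step, deriving $A_{n+b}=\langle(Q_m-Q_{n,m}^{\mu})F,p_{n+b}\rangle_\mu$ for $b=1,\dots,m$ and bounding it by $\delta^n/\rho_0(F)^{n+b}$, is correct and is exactly how the paper begins (its equation \eqref{banana11}). The gap is in the passage from this crude bound to the sharp one.

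Your ``reverse inequality'' argument does not work. The integrals $\frac{1}{2\pi i}\oint_{\Gamma_\rho}Q_{n,m}^{\mu}F\,s_k$ for $n<k\le n+m$ are \emph{not} Fourier coefficients of $Q_mF$; by the very residue identity you wrote down they equal the residue sum $\sum_l\alpha_l\,Q_{n,m}^{\mu}(\lambda_l)\,s_k(\lambda_l)$, while the actual Fourier coefficient $\langle Q_{n,m}^{\mu}F,p_k\rangle_\mu$ vanishes for these $k$. So the sentence ``otherwise the same representation would continue $Q_mF$ across $\Gamma_\sigma$'' has no force: the decay of those integrals is governed by $\delta$ and the $|\Phi(\lambda_l)|$, not by $\sigma=\rho_0(Q_mF)$. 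And even granting a lower bound on the right side, inverting the Vandermonde gives $|Q_{n,m}^{\mu}(\lambda_l)|\lesssim(|\Phi(\lambda_l)|/\sigma)^n$, which yields $\delta\le L/\sigma$ again, not $\delta\ge L/\sigma$. You correctly diagnose the difficulty---the crude bound is stuck at the \emph{nearest} pole while one needs the \emph{farthest}---but the proposed mechanism does not resolve it.

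The paper overcomes this by a completely different device. Assuming $F$ has $k<m$ poles $\tilde\alpha_1,\dots,\tilde\alpha_\gamma$ (with multiplicities $k_w$) in $D_{\rho_m(F)}$, it views the $k$ identities $[FQ_m]_{n+b}=\eta(\rho,n,b)+\sum_{w,p}\beta_n(w,p)\,(s_{n+b}/s_n)^{(p)}(\tilde\alpha_w)$, $b=1,\dots,k$, as a linear system in the $k$ residue-quantities $\beta_n(w,p)$; the coefficient determinant is an asymptotic generalized Vandermonde in the $\Phi(\tilde\alpha_w)^{-1}$ and is nonzero. Solving and substituting into the $(k{+}1)$-st identity produces a \emph{recurrence} $\sum_{j=1}^{k+1}a_{j,n}[FQ_m]_{n+j}=\sum_{j=1}^{k+1}a_{j,n}\eta(\rho,n,j)$ with uniformly bounded $a_{j,n}$ and $|a_{1,n}|\ge c>0$. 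Iterating this over $n,n+1,\dots,n+\nu_0-1$ and solving the resulting triangular system for $[FQ_m]_{n+1}$, the paper shows that the residue contributions telescope with growth at most $\sigma_\gamma^{\nu}$ while the $\eta$-terms carry the factor $(\delta/\rho)^{n}$; letting $\nu_0\to\infty$ and then $\rho\to\rho_m(F)$ yields $\varlimsup_n|[FQ_m]_n|^{1/n}\le\delta/\rho_m(F)$. This contradicts $\rho_m(F)<\infty$, forcing $\rho_m(F)=\infty$, and a separate argument (Lemma~\ref{trick}) then shows $F$ would be rational with fewer than $m$ poles, contradicting $\deg Q_{n,m}^{\mu}=m$ for large $n$. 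Hence $F$ has exactly $m$ poles in $D_{\rho_m(F)}$; Theorem~A identifies them as $\lambda_1,\dots,\lambda_m$, and rerunning the key estimate with $\rho_{m-1}(F)=L$ in place of $\rho_m(F)$ gives \eqref{banana75}. The elimination--recurrence--iteration scheme is the missing idea in your outline.
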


In \cite[Theorem 1]{suetininverse}, S.P. Suetin proved this result for any measure $\mu$ supported on $[-1,1]$ such that $\mu' > 0$ almost everywhere on $[-1,1]$. Our proof of Theorem \ref{inversetheorem} is strongly influenced by the methods employed in that paper.

As a consequence of Theorem A and Theorem \ref{inversetheorem}, we immediately have the following corollary.

\begin{corollary}\label{bana345}
Let $F\in \mathcal{H}(E),$ $\mu\in \mathcal{R}(E),$ and $m$ be a fixed nonnegative integer. Then the following two assertions are equivalent:
\begin{enumerate}
\item[$(a)$] $F$ has exactly $m$ poles in $D_{\rho_m(F)}.$
\item[$(b)$] $[n/m]_F^{\mu}$ is uniquely determined and has precisely $m$ poles for all $n$ sufficiently large, and there exists a polynomial $Q_{m}$ of degree $m$ such that
$$\varlimsup_{n \rightarrow \infty} \|Q_{n,m}^{\mu}-Q_m\|^{1/n}=\delta<1.$$
\end{enumerate}
Moreover, if either (a) or (b) takes place, then the poles of $F$ in $D_{\rho_m(F)}$ coincide with the zeros $\lambda_1,\ldots, \lambda_m$ of $Q_m$ and
$$\delta=\frac{\max_{1\leq j \leq m} |\Phi(\lambda_j)|}{\rho_{m}(F)}.$$
\end{corollary}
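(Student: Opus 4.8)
The plan is to read off each implication of the equivalence $(a)\Leftrightarrow(b)$ from one of the two theorems quoted above, and then to pin down the exact value of $\delta$ by confronting the two rate estimates they supply. No machinery beyond Theorem A and Theorem \ref{inversetheorem} should be required; the real work lies in matching hypotheses and in one continuity argument.

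For $(b)\Rightarrow(a)$ I would note that $(b)$ is verbatim the hypothesis of Theorem \ref{inversetheorem}: $F\in\mathcal H(E)$, $\mu\in\mathcal R(E)$, the approximant $[n/m]_F^\mu$ has exactly $m$ finite poles for all large $n$, and there is a monic $Q_m$ of degree $m$ with $\varlimsup_n\|Q_{n,m}^\mu-Q_m\|^{1/n}=\delta<1$. Theorem \ref{inversetheorem} then gives at once that $F$ has exactly $m$ poles in $D_{\rho_m(F)}$, namely the zeros $\lambda_1,\dots,\lambda_m$ of $Q_m$, which is assertion $(a)$ together with the identification of the poles. It also yields $\rho_m(F)\ge\delta^{-1}\max_j|\Phi(\lambda_j)|$, that is
\[
\delta\ \ge\ \frac{\max_{1\le j\le m}|\Phi(\lambda_j)|}{\rho_m(F)}.
\]

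For $(a)\Rightarrow(b)$ I would set $Q_m(z)=\prod_{k=1}^m(z-\lambda_k)$, where $\lambda_1,\dots,\lambda_m$ are the poles of $F$ in $D_{\rho_m(F)}$, and invoke Theorem A, whose hypotheses are exactly those of $(a)$. It supplies unique solvability for large $n$ and, by \eqref{convzeros}, the bound $\varlimsup_n\|Q_{n,m}^\mu-Q_m\|^{1/n}\le\max_j|\Phi(\lambda_j)|/\rho_m(F)<1$; in particular the geometric limit of $Q_{n,m}^\mu$ exists, is unique, equals this $Q_m$, and $\delta\le\max_j|\Phi(\lambda_j)|/\rho_m(F)$. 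The only clause of $(b)$ not handed to us outright is that $[n/m]_F^\mu$ has \emph{precisely} $m$ poles. Here uniqueness forces $\deg Q_{n,m}^\mu=m$ (the equivalence recorded after \eqref{averbvkaer}), so there are at most $m$ poles, and I would exclude cancellation by a Hurwitz/maximum-principle argument: since $Q_{n,m}^\mu\to Q_m$ with both monic of degree $m$, the zeros of $Q_{n,m}^\mu$ cluster at the $\lambda_k$, while on a small circle $C_k$ encircling $\lambda_k$ alone (a compact subset of $D_{\rho_m(F)}\setminus\{\lambda_1,\dots,\lambda_m\}$) one has $[n/m]_F^\mu\to F$ uniformly; were the approximant pole-free inside $C_k$, the maximum principle would keep it bounded there, contradicting that its limit $F$ blows up at $\lambda_k$. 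Hence each $\lambda_k$ attracts a genuine, uncancelled pole and $[n/m]_F^\mu$ has exactly $m$ poles, establishing $(b)$.

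To finish the ``moreover'' clause I would combine the two displayed inequalities: $(b)\Rightarrow(a)$ gives $\delta\ge\max_j|\Phi(\lambda_j)|/\rho_m(F)$ and $(a)\Rightarrow(b)$ gives the reverse, whence
\[
\delta=\frac{\max_{1\le j\le m}|\Phi(\lambda_j)|}{\rho_m(F)},
\]
while the uniqueness of the limit polynomial identifies its zeros with the poles $\lambda_1,\dots,\lambda_m$ of $F$ already found. The one genuine obstacle I foresee is the ``precisely $m$ poles'' claim in $(a)\Rightarrow(b)$: ruling out spurious cancellation of a zero of $Q_{n,m}^\mu$ against $P_{n,m}^\mu$ needs the convergence-plus-maximum-principle argument above rather than Theorem A alone; everything else is hypothesis-matching and a comparison of the two rates.
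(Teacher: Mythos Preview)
Your approach is exactly the paper's: the corollary is stated there as an immediate consequence of Theorem~A and Theorem~\ref{inversetheorem}, with no further argument, and your reading of the two implications and the matching of the two rate estimates to pin down $\delta$ is correct.

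The one place your argument needs patching is the multiplicity count in $(a)\Rightarrow(b)$. Your maximum-principle argument shows that near each \emph{distinct} pole $\lambda$ of $F$ the approximant $[n/m]_F^\mu$ must have at least one genuine pole, but if $\lambda$ is a pole of $F$ of order $m_\lambda>1$ you have not shown that $[n/m]_F^\mu$ has $m_\lambda$ poles (counting multiplicity) nearby; hence when the $\lambda_j$ are not all distinct your conclusion ``exactly $m$ poles'' does not yet follow. A clean fix: choose the small circle $C_k$ about $\lambda$ so that $F$ neither vanishes nor has a pole on or inside $C_k$ except for the pole at $\lambda$, and apply the argument principle to $[n/m]_F^\mu$ on $C_k$. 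Since $[n/m]_F^\mu\to F$ uniformly on $C_k$ (hence $[n/m]_F^\mu\neq 0$ there for large $n$, and the logarithmic derivatives converge uniformly on $C_k$), the number of poles minus zeros of $[n/m]_F^\mu$ inside $C_k$ equals $m_\lambda$ for all large $n$; as $Q_{n,m}^\mu$ has exactly $m_\lambda$ zeros inside $C_k$ by Hurwitz, the approximant has exactly $m_\lambda$ poles and no zeros there. Summing over the distinct poles yields precisely $m$. Alternatively, one may invoke Gonchar's lemma on the attraction of poles under convergence in capacity, exactly as the paper does inside the proof of Theorem~\ref{inversetheorem}.
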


\section{Auxiliary Lemmas}

The second type functions $s_n(z)$  defined by
\begin{equation*}
s_n(z):=\int \frac{\overline{p_n(\zeta)}}{z-\zeta} d\mu(\zeta), \quad z\in \overline{\mathbb{C}}\setminus \mbox{supp}(\mu),
\end{equation*}
play a major role in our proof. The first lemma connects the asymptotic behavior of the orthonormal polynomials $p_n$ and that of the second type functions $s_n.$


\begin{lemma}\label{secondtype}
If  $\mu \in \mathcal{R}(E),$ then $$\lim_{n \rightarrow \infty} p_n(z)s_n(z)=\frac{\Phi'(z)}{\Phi(z)},$$
uniformly inside $\overline{\mathbb{C}}\setminus E.$ Consequently, for any compact set $K\subset \mathbb{C}\setminus E,$ there exists $n_0$  such that  $s_n(z)\not=0$ for all $z\in K$ and $n \geq n_0.$
\end{lemma}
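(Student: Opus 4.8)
The plan is to reduce the claim to a statement about Cauchy transforms of the probability measures $|p_n|^2\,d\mu$. The starting point is the algebraic identity
\[
p_n(z)s_n(z)=\int\frac{|p_n(\zeta)|^2}{z-\zeta}\,d\mu(\zeta),\qquad z\in\overline{\mathbb{C}}\setminus\mbox{supp}(\mu).
\]
To obtain it I would write $p_n(z)=\bigl(p_n(z)-p_n(\zeta)\bigr)+p_n(\zeta)$ inside the integral defining $p_n(z)s_n(z)$. For fixed $z$ the difference quotient $q_z(\zeta):=\bigl(p_n(z)-p_n(\zeta)\bigr)/(z-\zeta)$ is a polynomial in $\zeta$ of degree $n-1$, so $\int q_z(\zeta)\overline{p_n(\zeta)}\,d\mu(\zeta)=\langle q_z,p_n\rangle_\mu=0$ by orthogonality of $p_n$ to all polynomials of degree less than $n$; the remaining term is exactly the right-hand side. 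Writing $\nu_n:=|p_n|^2\,d\mu$, which is a probability measure supported in $\mbox{supp}(\mu)\subseteq E$ because $\int|p_n|^2\,d\mu=\langle p_n,p_n\rangle_\mu=1$, this says $p_ns_n=\widehat{\nu_n}$, the Cauchy transform of $\nu_n$.

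Next I would set up a normal-family argument. Since $\nu_n$ is a probability measure on $E$, on every compact $K\subset\overline{\mathbb{C}}\setminus E$ we have $|\widehat{\nu_n}(z)|\le 1/\mbox{dist}(K,E)$, so $\{p_ns_n\}$ is uniformly bounded on compact subsets of $\overline{\mathbb{C}}\setminus E$ and hence a normal family there. It therefore suffices to prove that every locally uniform limit of a subsequence equals $\Phi'/\Phi$. By weak-$*$ compactness, after passing to a further subsequence $\nu_{n_k}\to\nu$ weak-$*$ for some probability measure $\nu$ on $E$; since $\zeta\mapsto 1/(z-\zeta)$ is continuous on $E$, uniformly for $z$ in a compact subset of $\overline{\mathbb{C}}\setminus E$, the corresponding limit of $p_{n_k}s_{n_k}$ is $\widehat{\nu}$.

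The heart of the matter, and the step I expect to be the main obstacle, is to identify $\nu$ as the equilibrium measure $\mu_E$ of $E$. This is where the hypothesis $\mu\in\mathcal{R}(E)$ enters: ratio asymptotics implies regularity \eqref{radius3}, and for regular measures the measures $|p_n|^2\,d\mu$ converge weak-$*$ to $\mu_E$ (a consequence of the $n$-th root asymptotics of $p_n$ and the theory of regular measures in \cite{totik}). Granting this, $\nu=\mu_E$, so every subsequential limit equals $\widehat{\mu_E}$; by normality the full sequence $p_ns_n$ converges to $\widehat{\mu_E}$ uniformly on compact subsets of $\overline{\mathbb{C}}\setminus E$ (the behaviour at $\infty$ being covered since $z\,\widehat{\nu_n}(z)\to1$ there). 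Finally I would invoke the classical identity $\widehat{\mu_E}(z)=\Phi'(z)/\Phi(z)$, which follows from $\log|\Phi(z)|=g_{\overline{\mathbb{C}}\setminus E}(z,\infty)=-U^{\mu_E}(z)+\log(1/\textup{cap}(E))$ together with $\Phi'/\Phi=(\log\Phi)'=2\partial_z\log|\Phi|=-2\partial_zU^{\mu_E}=\widehat{\mu_E}$. This establishes the displayed limit.

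For the last assertion, on a compact $K\subset\mathbb{C}\setminus E$ the limit $\Phi'/\Phi$ is holomorphic and nowhere zero (as $\Phi$ is conformal, $\Phi'\neq0$, and $|\Phi|>1$ on $K$), so $m:=\min_{z\in K}|\Phi'(z)/\Phi(z)|>0$. Uniform convergence then gives $|p_n(z)s_n(z)|\ge m/2$ on $K$ for all sufficiently large $n$, whence $s_n(z)\neq0$ on $K$ for such $n$.
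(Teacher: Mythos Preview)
The paper does not give a self-contained proof of this lemma; it simply cites Lemma~3.1 of \cite{bosuwan}. So a line-by-line comparison with ``the paper's own proof'' is not available here. Your reduction via the identity
\[
p_n(z)s_n(z)=\int\frac{|p_n(\zeta)|^2}{z-\zeta}\,d\mu(\zeta)=\widehat{\nu_n}(z),\qquad \nu_n:=|p_n|^2\,d\mu,
\]
the normal-families/weak-$*$ compactness step, the identification $\widehat{\mu_E}=\Phi'/\Phi$, and the final nonvanishing argument are all correct and standard.

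The gap is precisely at the step you yourself flag as the heart of the matter. You claim that for \emph{regular} $\mu$ one has $\nu_n\to\mu_E$ weak-$*$, attributing this to \cite{totik}. That statement is not in Stahl--Totik, and it is not true under regularity alone. What regularity yields is $n$-th-root asymptotics of $p_n$, weak-$*$ convergence of the normalized \emph{zero-counting} measures to $\mu_E$, and (under additional hypotheses on $E$) convergence of the Ces\`aro means $\tfrac{1}{n}\sum_{k=0}^{n-1}|p_k|^2\,d\mu$ to $\mu_E$; it does \emph{not} give convergence of the individual measures $|p_n|^2\,d\mu$. On the unit circle, for instance, $|p_n|^2\,d\mu\to d\theta/2\pi$ is equivalent to the Verblunsky coefficients tending to zero, i.e.\ to ratio asymptotics, and there exist regular $\mu$ for which this fails. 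In short, you have replaced the lemma by a statement of the same strength and then invoked a reference that does not contain it.

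The hypothesis you must actually use is $\mu\in\mathcal{R}(E)$, not the weaker regularity it implies. The identity $p_ns_n=\widehat{\nu_n}$ is still the right starting point, but to pin down every subsequential limit as $\Phi'/\Phi$ you need an argument that genuinely exploits $p_n/p_{n+1}\to 1/\Phi$ (and Lemma~\ref{kappa}); a bare appeal to regular-measure theory will not close it. Once the main limit is established, your proof of the last sentence (nonvanishing of $s_n$ on compacts) is fine.
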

\begin{proof}[Proof of Lemma \ref{secondtype}] See Lemma 3.1 in \cite{bosuwan}.
\end{proof}

Recall that $\kappa_n$ is the leading coefficient of the orthonormal polynomial $p_n.$ The second lemma shows that under the condition $\mu\in \mathcal{R}(E),$ the limit of the ratios of $\kappa_n$ is the capacity of $E$.

\begin{lemma}\label{kappa}
If  $\mu \in \mathcal{R}(E),$ then
$$\lim_{n \rightarrow \infty}\frac{\kappa_n}{\kappa_{n+1}}=\textup{cap}(E),$$
where $\textup{cap}(E)$ is the capacity of $E.$
\end{lemma}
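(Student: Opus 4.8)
The plan is to recover the ratio $\kappa_n/\kappa_{n+1}$ as the coefficient of $z^{-1}$ in the Laurent expansion of $p_n/p_{n+1}$ at infinity, and then to transfer that coefficient through the ratio asymptotics \eqref{ratioasym}. From $p_n(z)=\kappa_n z^n+\cdots$ and $p_{n+1}(z)=\kappa_{n+1}z^{n+1}+\cdots$ one reads off
\[
\frac{p_n(z)}{p_{n+1}(z)}=\frac{\kappa_n}{\kappa_{n+1}}\,\frac{1}{z}+\mathcal{O}\!\left(\frac{1}{z^{2}}\right),\qquad z\to\infty,
\]
so $\kappa_n/\kappa_{n+1}$ is exactly the $z^{-1}$–coefficient of this rational function. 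By Fej\'er's classical localization the zeros of every $p_{n+1}$ lie in the convex hull of $\mathrm{supp}(\mu)\subset E$, a fixed compact set independent of $n$ (if $(z-z_0)q=p_{n+1}$ with $z_0$ a zero, then $0=\langle p_{n+1},q\rangle_\mu$ forces $z_0=\int z|q|^2\,d\mu/\int|q|^2\,d\mu$). Hence there is a radius $R$, the same for all $n$, with $E\subset\{|z|<R\}$ and $p_n/p_{n+1}$ holomorphic on $\{|z|\ge R\}$, so that
\[
\frac{\kappa_n}{\kappa_{n+1}}=\frac{1}{2\pi i}\oint_{|z|=R}\frac{p_n(z)}{p_{n+1}(z)}\,dz .
\]

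Next I would pass to the limit. The circle $\{|z|=R\}$ is a compact subset of $\mathbb{C}\setminus E$, so \eqref{ratioasym} gives $p_n/p_{n+1}\to 1/\Phi$ uniformly there, and uniform convergence permits interchanging the limit with the integral:
\[
\lim_{n\to\infty}\frac{\kappa_n}{\kappa_{n+1}}=\frac{1}{2\pi i}\oint_{|z|=R}\frac{dz}{\Phi(z)} .
\]
To evaluate the right-hand side, note that $\Phi$ is holomorphic and zero-free on $\{|z|\ge R\}$ (there $|\Phi|>1$), and by the standard relation between the exterior conformal map and the logarithmic capacity one has $\lim_{z\to\infty}z/\Phi(z)=\textup{cap}(E)$, i.e. $1/\Phi(z)=\textup{cap}(E)\,z^{-1}+\mathcal{O}(z^{-2})$ in the exterior region. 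Thus the contour integral again extracts precisely the $z^{-1}$–coefficient $\textup{cap}(E)$, and the lemma follows.

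The single genuine obstacle is the interchange of the two limiting processes $n\to\infty$ and $z\to\infty$: although $\infty\in\overline{\mathbb{C}}\setminus E$ is a point at which \eqref{ratioasym} converges uniformly, a direct evaluation of the ratios at $\infty$ yields only the uninformative identity $0=0$, so the leading-coefficient data must be captured one order deeper. The Cauchy-coefficient (contour-integral) representation is what makes this rigorous, the accompanying technical point being the choice of $R$ uniform in $n$, supplied by Fej\'er's theorem. An equivalent route, should one prefer to avoid integrals, is to put $w=1/z$, observe that $w^{-1}p_n(1/w)/p_{n+1}(1/w)\to w^{-1}/\Phi(1/w)$ uniformly on a disk $\{|w|\le 1/R\}$, and read off the values at $w=0$ via the maximum principle; this once more reduces the statement to the convergence of the first Taylor coefficient under uniform convergence of holomorphic functions.
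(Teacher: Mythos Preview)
Your proof is correct and follows the same idea as the paper's: identify $\kappa_n/\kappa_{n+1}$ with the leading Laurent coefficient of $p_n/p_{n+1}$ at infinity and pass to the limit via \eqref{ratioasym}. The paper compresses this into a one-line interchange of the limits $n\to\infty$ and $z\to\infty$ in $z\,p_n(z)/p_{n+1}(z)$, justified by uniform convergence inside $\overline{\mathbb{C}}\setminus E$; your Cauchy-integral extraction (and the $w=1/z$ variant you describe at the end) is precisely the mechanism that makes that interchange rigorous, and your appeal to Fej\'er's theorem is a pleasant but inessential extra, since the assumed uniform convergence on compacts of $\overline{\mathbb{C}}\setminus E$ already forces the zeros of $p_{n+1}$ to avoid $\{|z|\ge R\}$ for all large $n$.
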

\begin{proof}[Proof of Lemma \ref{kappa}]
Since $\lim_{n \rightarrow \infty} {zp_n(z)}/{p_{n+1}(z)}={z}/{\Phi(z)}$ holds uniformly inside  $\overline{\mathbb{C}}\setminus E,$ then
$$\lim_{n\rightarrow \infty} \frac{\kappa_n}{\kappa_{n+1}}=\lim_{n \rightarrow \infty} \lim_{z \rightarrow \infty} \frac{z p_n(z)}{p_{n+1}(z)}=  \lim_{z \rightarrow \infty} \lim_{n \rightarrow \infty} \frac{z p_n(z)}{p_{n+1}(z)}=\lim_{z \rightarrow \infty}\frac{z}{\Phi(z)}=\textup{cap}(E).$$
\end{proof}

The next lemma is a curious relation of complex numbers which we will use at the end of the proof of Theorem 1.

\begin{lemma}\label{trick}
 Let $N_0\in \mathbb{N}$ and $C>0.$ If a sequence of complex numbers $\{F_N\}_{N \in \mathbb{N}}$ has the following properties:
\begin{enumerate}
\item[$(i)$] $\lim_{N \rightarrow \infty} |F_N|^{1/N}=0,$
\item[$(ii)$] $|F_N|\leq C \sum_{k=N+1}^{\infty} |F_k|,$ for all $N \geq N_0$,
\end{enumerate}
then there exists $N_1\in \mathbb{N}$ such that $F_N=0$ for all $N \geq N_1.$

\end{lemma}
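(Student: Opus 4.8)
The plan is to prove Lemma~\ref{trick} by a contrapositive/contradiction argument built around the key quantity $S_N := \sum_{k=N+1}^\infty |F_k|$. First I would observe that condition $(i)$ forces $\sum_k |F_k|$ to converge (root test), so each tail $S_N$ is finite and $S_N \to 0$. The hypothesis $(ii)$ can be rewritten in terms of these tails: since $|F_N| = S_{N-1} - S_N$, condition $(ii)$ reads $S_{N-1} - S_N \le C\, S_N$, i.e. $S_{N-1} \le (1+C) S_N$ for all $N \ge N_0$. This is the engine of the proof, so I would derive it cleanly at the outset.

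Next I would argue by contradiction: suppose $F_N \neq 0$ for infinitely many $N$, so that $S_N > 0$ for every $N$ (once one tail is positive, all earlier ones are too, and in fact all later ones must be positive as well because if some $S_{N} = 0$ then $F_k = 0$ for all $k > N$, contradicting that infinitely many $F_N$ are nonzero). Now I would iterate the inequality $S_{N-1} \le (1+C) S_N$ \emph{backwards}: for any $N \ge N_0$ and any $L \ge N$,
\begin{equation*}
S_{N} \le (1+C)^{L-N}\, S_{L}.
\end{equation*}
The point is that the recursion bounds an \emph{earlier} (fixed, positive) tail by a growing power of $(1+C)$ times a \emph{later} tail. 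Fixing $N = N_0$ and letting $L$ vary, I get $S_{N_0} \le (1+C)^{L-N_0} S_L$ for all $L \ge N_0$, hence
\begin{equation*}
S_L \ge (1+C)^{-(L-N_0)} S_{N_0} = S_{N_0}(1+C)^{N_0}\,(1+C)^{-L}.
\end{equation*}

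This lower bound on the tails is exactly what collides with condition $(i)$. Taking $L$-th roots and using $S_{N_0}(1+C)^{N_0} > 0$ fixed, I obtain $\varliminf_{L\to\infty} S_L^{1/L} \ge (1+C)^{-1} > 0$. On the other hand, condition $(i)$ gives $|F_N|^{1/N} \to 0$, which by the root-test estimate on tails yields $\limsup_L S_L^{1/L} \le \limsup_N |F_N|^{1/N} = 0$; concretely, for any $\varepsilon \in (0,(1+C)^{-1})$ eventually $|F_k| \le \varepsilon^k$, so $S_L \le \sum_{k=L+1}^\infty \varepsilon^k = \varepsilon^{L+1}/(1-\varepsilon)$, giving $S_L^{1/L} \to \varepsilon < (1+C)^{-1}$, a contradiction. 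Therefore only finitely many $F_N$ can be nonzero, which is the desired conclusion.

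The main obstacle, and the only subtle point, is the bookkeeping around whether the tails are positive: I must rule out the degenerate possibility that $S_N = 0$ for some finite $N$ \emph{while} infinitely many $F_N$ are nonzero (which cannot happen, since $S_N = 0$ kills all later terms), and I must make sure the backward iteration is applied only on the range $N \ge N_0$ where $(ii)$ is valid. Everything else is a clean root-test comparison: the geometric lower bound $(1+C)^{-L}$ from the recursion versus the superexponential decay forced by $(i)$. I would present the recursion-to-lower-bound step as the heart of the argument and keep the root-test comparison brief.
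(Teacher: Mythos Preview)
Your argument is correct. The reformulation via tail sums $S_N=\sum_{k>N}|F_k|$ is the right move: condition $(ii)$ becomes the clean one-step recursion $S_{N-1}\le(1+C)S_N$, and iterating it against the superexponential decay of $S_L$ forced by $(i)$ gives an immediate contradiction once some tail is positive. Your handling of the positivity bookkeeping and the index range $N\ge N_0$ is fine.

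The paper's proof is organized differently. Rather than work with tails and argue by contradiction, it fixes $M$ large enough that $|F_N|<(C+2)^{-N}$ and $(ii)$ both hold for $N\ge M$, and then shows directly by induction on an auxiliary parameter $n$ that $|F_N|\le\bigl(\tfrac{C}{C+1}\bigr)^n(C+2)^{-N}$ for every $n\ge0$; the inductive step feeds the current bound into $(ii)$ and sums the geometric series, gaining a factor $C/(C+1)$ each time, and letting $n\to\infty$ forces $F_N=0$. The two arguments are essentially dual: the paper bootstraps an \emph{upper} bound on $|F_N|$ down to zero, while you extract a geometric \emph{lower} bound on $S_L$ and collide it with $(i)$. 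Your tail-sum version is arguably more transparent, since it isolates the single inequality $S_{N-1}\le(1+C)S_N$ as the whole mechanism; the paper's version avoids the contradiction framing and the root-test estimate on tails, at the cost of carrying the two-parameter bound through an induction.
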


\begin{proof}[Proof of Lemma \ref{trick}] Given the assumptions, there exists $M$ such that for all
$ N\ge M,$
$$ |F_N|^{1/N}<\frac{1}{C+2}, \quad \quad \textup{and} \quad \quad  |F_N|\le C\sum_{k=N+1}^\infty|F_k|.$$ We claim that for those $N$'s,
 $$|F_N|\le\left(\frac{C}{C+1}\right)^n\left(\frac{1}{C+2}\right)^N$$
 for any non-negative integer $n$. Then, letting $n\rightarrow \infty,$ we see that $|F_N|=0.$

To prove the claim, we use induction on $n.$ When $n=0,$ the formula follows immediately from $|F_N|^{1/N}<1/(C+2).$ In general, using induction  it follows that
 $$|F_N|\le C\sum_{k=N+1}^\infty|F_k|\le C\sum_{k=N+1}^{\infty}\left(\frac{C}{C+1}\right)^n\left(\frac{1}{C+2}\right)^k=\left(\frac{C}{C+1}\right)^{n+1}\left(\frac{1}{C+2}\right)^N.$$
 This completes the proof.
\end{proof}


\section{Proof of Theorem \ref{inversetheorem}}


In the proof of Theorem 1, we mainly use the asymptotic properties of the orthogonal polynomials $p_n$ and the second type functions $s_n$ listed below.

From \eqref{ratioasym}, it follows that
\begin{eqnarray}\label{banana7}
\lim_{n \rightarrow \infty} \frac{p_{n}(z)}{p_{n+l}(z)}=\frac{1}{\Phi(z)^l}, \quad l=0,1,\ldots,
\end{eqnarray}
uniformly inside $\overline{\mathbb{C}} \setminus E$. By (\ref{banana7}) and Lemma \ref{secondtype} for any $l,p=0,1,\ldots,$ we have
\begin{align}\label{banana8}
\lim_{n \rightarrow \infty} \frac{s_{n+l}(z)}{s_{n}(z)}&=\lim_{n \rightarrow \infty} \frac{p_n(z)}{p_{n+l}(z)}\frac{p_{n+l}(z)s_{n+l}(z)}{p_n(z)s_n(z)}=\frac{1}{\Phi(z)^{l}} \frac{\Phi'(z)/\Phi(z)}{\Phi'(z)/\Phi(z)}=\frac{1}{\Phi(z)^{l}} ,
\end{align}
uniformly inside $\overline{\mathbb{C}} \setminus E.$ Furthermore,
\begin{eqnarray}\label{banana9}
\lim_{n \rightarrow \infty} |p_n(z)|^{1/n}=|\Phi(z)|
\end{eqnarray}
and
\begin{eqnarray}\label{banana10}
\lim_{n \rightarrow \infty} |s_n(z)|^{1/n}=|\Phi(z)|^{-1}
\end{eqnarray}
uniformly inside $\mathbb{C} \setminus E,$ are  trivial consequences of (\ref{banana7}) and (\ref{banana8}).


\begin{proof}[Proof of Theorem \ref{inversetheorem}]

We organize the proof of Theorem \ref{inversetheorem} as follows. First of all, we assume that in the region $D_{\rho_m(F)},$ the function $F$ has $k < m$ poles  in $D_{\rho_m(F)}$ at the points $\tilde{\lambda}_1,\ldots, \tilde{\lambda}_k$. Set
$$Q_{m}(z)=\prod_{j=1}^m(z-\lambda_j),  \quad \quad\quad  \tilde{Q}_k(z):=\prod_{j=1}^k (z-\tilde{\lambda}_j)=\prod_{w=1}^{\gamma}(z-\tilde{\alpha}_w)^{k_w},$$
where $\tilde{\alpha}_1,\ldots,\tilde{\alpha}_k$ are distinct and $\sum_{w=1}^{\gamma}k_w=k$.  Arguing as in the proof of  \cite[Theorem 2.1]{bosuwan}, our assumptions imply that  the
sequence $[n/m]_F^{\mu}(z)$ converges in capacity to $F$ inside $D_{\rho_m(F)},$ as $n\rightarrow \infty.$ More precisely, for any $\varepsilon > 0$ and any compact subset $K \subset D_{\rho_m(F)}$
\[ \lim_{n \to \infty} \textup{cap}(\{z \in K: |F(z) - [n/m]_F^{\mu}(z)| \geq \varepsilon\}) = 0.
\]
By Gonchar's lemma (see \cite[Lemma 1]{goncharcap} on page 507 and sentence at the beginning of that page regarding Cartan's inequality as well as the translator's correction), this implies that each $\tilde{\alpha}_w$ attracts at least $k_w$ poles of $[n/m]_F^{\mu}$ as $n \rightarrow \infty.$ From this and  \eqref{banana51} it follows that $\tilde{\lambda}_1,\ldots, \tilde{\lambda}_k$ are zeros of $Q_m$ and $Q_{m}F$ is holomorphic in $D_{\rho_m(F)}.$ We can reindex $\lambda_j,\tilde{\lambda}_j,\tilde{\alpha}_w$ so that
$$\lambda_j=\tilde{\lambda}_j,\quad \quad j=1,\ldots,k,\quad \quad  \textup{and}  \quad \quad \sigma_w:=|\Phi(\tilde{\alpha}_w)|, \quad w=1,\ldots, \gamma,$$
$$|\Phi(\lambda_1)|\leq \ldots \leq |\Phi(\lambda_k)|,\quad \quad \textup{and} \quad \quad  1\leq \sigma_1\leq \ldots \leq \sigma_\gamma.$$
Next, we will prove by contradiction that  the assumption $k <m$ on the number of poles of $F$ in $D_{\rho_m(F)},$ implies that $D_{\rho_{m}(F)}=\mathbb{C},$ i.e. $\rho_m(F)=\infty.$ To this end we show that if $\rho_m(F) <\infty$ then $F$ has at most $m$ poles in a canonical region which is strictly larger than $D_{\rho_m(F)}$ which  clearly contradicts the definition of $\rho_{m}(F).$ This step is the main part of the proof of Theorem 1. Finally, we show that  if $\rho_m(F)=\infty$ and $k < m$ then $F$ is a rational function with less than $m$ poles which  contradicts the assumption that for all $n$ sufficiently large all the polynomials $Q_{n,m}^{\mu}$ have degree $m.$ Thus, $F$ must have exactly $m$ poles in $D_{\rho_m(F)}$ and using Theorem A it follows that they must be the  points $\lambda_1,\ldots, \lambda_m$ counting multiplicities.

Let us suppose that $F$ has $k < m$ poles at the points  $\tilde{\lambda}_1,\ldots, \tilde{\lambda}_k$ in $D_{\rho_m(F)}$. The indices are taken so that  $\tilde{\lambda}_j=\lambda_j, j=1,\ldots,k$. Let us prove that $D_{\rho_{m}(F)}=\mathbb{C}$. To the contrary, assume that $\rho_m(F)< \infty.$ We plan to show that
\begin{equation}\label{banana60}
\varlimsup_{n \rightarrow \infty} \left|\left[F Q_m\right]_n \right|^{1/n} \leq \frac{\delta}{\rho_m(F)}.
\end{equation}
(Recall that $[FQ_{m}]_n:=\langle FQ_m, p_n \rangle_{\mu}.$)
Combining this and \eqref{banana9}, it follows that $Q_{m}F$ is holomorphic in $D_{\sigma},$ where $\sigma=\rho_m(F)/\delta < \rho_m(F).$ This implies that $F$ is meromorphic with at most $m$ poles in $D_{\sigma}$ which contradicts the definition of $\rho_m(F)$.

Now, let us prove \eqref{banana60}. By the definition of Pad\'e-orthogonal approximants, we have
$$F(z)Q_m(z) =F(z)(Q_m(z)-Q_{n,m}^{\mu}(z))+ P_{n,m}^{\mu}(z)+\sum_{\nu=n+m+1}^{\infty} \langle Q_{n,m}^{\mu}F,\, p_{\nu} \rangle_{\mu} p_\nu(z),$$
which implies
\begin{equation}\label{banana11}
[F Q_{m}]_{n+b}=[F (Q_m-Q_{n,m}^{\mu})]_{n+b}, \quad \quad b=1,\ldots,m.
\end{equation}
Applying Cauchy's residue theorem to the function $F (Q_m-Q_{n,m}^{\mu})$ on the closed region $\overline{D}_{\rho}\setminus D_r:=\{z\in \mathbb{C}: r \leq |\Phi(z)|\leq \rho\},$ $1<r < \sigma_1, \sigma_\gamma< \rho< \rho_m,$  we obtain
\begin{align}
&[F (Q_m-Q_{n,m}^{\mu})]_{n+b}=\frac{1}{2\pi i}\int_{\Gamma_r} F(t) (Q_m(t)-Q_{n,m}^{\mu}(t))s_{n+b}(t) dt \notag \\
&=\frac{1}{2 \pi i} \int_{\Gamma_\rho} F(t) (Q_m(t)-Q_{n,m}^{\mu}(t))s_{n+b}(t) dt -\sum_{w=1}^{\gamma} \textup{res}(F (Q_m-Q_{n,m}^{\mu})s_{n+b},\tilde{\alpha}_w)\notag\\
&=\frac{1}{2 \pi i} \int_{\Gamma_\rho} F(t) (Q_m(t)-Q_{n,m}^{\mu}(t))s_{n+b}(t) dt+\sum_{w=1}^{\gamma} \textup{res}(F Q_{n,m}^{\mu}s_{n+b},\tilde{\alpha}_w).\label{banana17}
\end{align}
Note that $F Q_{n,m}^{\mu}s_{n+b}$ is meromorphic on $\overline{D}_{\rho}\setminus D_{r}$ and has a pole at $\tilde{\alpha}_w$ of multiplicity at most $k_w$ for each $w=1,\ldots, \gamma$. Using the limit formula for residue, we have
\begin{equation}
\textup{res}(FQ_{n,m}^{\mu}s_{n+b},\tilde{\alpha}_w)=\frac{1}{(k_w-1)!}\lim_{z \rightarrow \tilde{\alpha}_w} ((z-\tilde{\alpha}_w)^{k_w} F(z) Q_{n,m}^{\mu}(z) s_{n+b}(z) )^{(k_w-1)}.
\end{equation}
By the Leibniz formula and the fact that for $n$ sufficiently large, $s_n(z)\not=0$ for $z\in \mathbb{C}\setminus E$ (see Lemma \ref{secondtype}), we can transform the expression under the limit sign as follows
\begin{align}\label{banana40}
&((z-\tilde{\alpha}_w)^{k_w} F(z) Q_{n,m}^{\mu}(z) s_{n+b}(z) )^{(k_w-1)}=\left((z-\tilde{\alpha}_w)^{k_w} F(z) Q_{n,m}^{\mu}(z) s_n(z) \frac{s_{n+b}(z)}{s_n(z)} \right)^{(k_w-1)}\notag\\
&=\sum_{p=0}^{k_w-1} {k_w-1 \choose p} ((z-\tilde{\alpha}_w)^{k_w}Q_{n,m}^{\mu}(z) F(z) s_n(z))^{(k_w-1-p)} \left(\frac{s_{n+b}}{s_{n}} \right)^{(p)}(z).
\end{align}
To avoid long expressions, let us introduce the following notation
$$\beta_n(w,p):=\frac{1}{(k_w-1)!}{k_w-1 \choose p} \lim_{z \rightarrow \tilde{\alpha}_w} ((z-\tilde{\alpha}_w)^{k_w}Q_{n,m}^{\mu}(z) F(z) s_n(z))^{(k_w-1-p)},$$
for $w=1,\ldots, \gamma$ and $p=0,\ldots, k_w-1$ and
\begin{equation}
\label{e}\eta(\rho,n,b):=\frac{1}{2 \pi i} \int_{\Gamma_\rho} F(t) (Q_m(t)-Q_{n,m}^{\mu}(t))s_{n+b}(t) dt,\quad \quad b=1,\ldots, m
\end{equation}
(notice that the $\beta_n(w,p)$ do not depend on $b$). So, by \eqref{banana11} we can rewrite  (\ref{banana17}) as
\begin{equation}\label{banana13}
[F Q_m]_{n+b}=\eta(\rho,n,b)+\sum_{w=1}^{\gamma} \sum_{p=0}^{k_w-1}  \beta_n(w,p) \left(\frac{s_{n+b}}{s_{n}} \right)^{(p)}(\tilde{\alpha}_w) , \quad b=1,\ldots,m.
\end{equation}
Since $k\leq m-1,$ we have
\begin{equation}\label{banana12}
\sum_{w=1}^{\gamma} \sum_{p=0}^{k_w-1}  \beta_n(w,p) \left(\frac{s_{n+b}}{s_{n}} \right)^{(p)}(\tilde{\alpha}_w)
=[F Q_m]_{n+b}-\eta(\rho,n,b), \quad b=1,\ldots,k.
\end{equation}
We will view \eqref{banana12} as a system of $k$ equations on the $k$ unknowns $\beta_n(w,p).$ If we can show that

\begin{equation}
\begin{array}{c}
{ \begin{array}{c}
    {w=1,\ldots,\gamma}
\end{array}}\\[5pt]
\Lambda_n=\begin{vmatrix}\label{matrixL}
  \left(\frac{s_{n+1}}{s_n}\right)(\tilde{\alpha}_w) &  \left(\frac{s_{n+1}}{s_n}\right)'(\tilde{\alpha}_w) &  \cdots & \left(\frac{s_{n+1}}{s_n}\right)^{(k_w-1)}(\tilde{\alpha}_w)  \\
  \left(\frac{s_{n+2}}{s_n}\right)(\tilde{\alpha}_w) &  \left(\frac{s_{n+2}}{s_n}\right)'(\tilde{\alpha}_w) &  \cdots &\left(\frac{s_{n+2}}{s_n}\right)^{(k_w-1)}(\tilde{\alpha}_w)  \\
   \vdots   & \vdots    & \vdots &  \vdots  \\
  \left(\frac{s_{n+k}}{s_n}\right)(\tilde{\alpha}_w) & \left(\frac{s_{n+k}}{s_n}\right)'(\tilde{\alpha}_w) &  \cdots &  \left(\frac{s_{n+k}}{s_n}\right)^{(k_w-1)}(\tilde{\alpha}_w)  \\
 \end{vmatrix}\not=0
  \end{array}
\end{equation}
(this expression represents the determinant of order $k$ in
which the indicated group of columns are successively written out for $w= 1,\ldots,\gamma$),
then we can express $\beta_n(w,p)$ in terms of $(s_{n+b}/s_n)^{(p)}(\tilde{\alpha}_w)$ and $[F Q_m]_{n+b}-\eta(\rho,n,b).$ However, from \eqref{banana8} and the Weierstrass theorem it follows that
\begin{eqnarray*}
&\lim_{n \rightarrow \infty} \Lambda_n=\Lambda:=
\begin{array}{c}
{ \begin{array}{c}
    {w=1,\ldots,\gamma}
\end{array}}\\[5pt]
\begin{vmatrix}
  R(\tilde{\alpha}_w) &  R'(\tilde{\alpha}_w) &  \cdots & R^{(k_w-1)}(\tilde{\alpha}_w)  \\
 R^2(\tilde{\alpha}_w) & (R^2)'(\tilde{\alpha}_w) &  \cdots &(R^2)^{(k_w-1)}(\tilde{\alpha}_w)  \\
   \vdots   & \vdots    & \vdots &  \vdots  \\
 R^k(\tilde{\alpha}_w) & (R^k)'(\tilde{\alpha}_w) &  \cdots &  (R^k)^{(k_w-1)}(\tilde{\alpha}_w)  \\
 \end{vmatrix}
  \end{array}
 \end{eqnarray*}
\begin{align}\label{banana18}
 &=\prod_{w=1}^{\gamma}(k_w-1)!! \prod_{w=1}^{\gamma} (-\Phi'(\tilde{\alpha}_w))^{k_w(k_w-1)/2} \prod_{w=1}^{\gamma} \Phi(\tilde{\alpha}_w)^{-k_w^2} \prod_{1\leq i< j\leq \gamma} \left(\frac{1}{\Phi(\tilde{\alpha}_j)}-\frac{1}{\Phi(\tilde{\alpha}_i)} \right)^{k_i k_j}\notag \\
 &\not=0,
\end{align}
\noindent where $R(z)=1/\Phi(z)$ and $n!!=0!1!\cdots$n!
(using for example \cite[Theorem 1]{Sobczyk} for proving the last equality), for sufficiently large $n,$ $\Lambda_n\not=0.$ Therefore, for all sufficiently large $n$, $|\Lambda_n|\geq c_1>0$ and we will only consider such $n$ below. Hereafter,  $c_1,c_2,c_3,\ldots$ denote absolute constants which do not depend on $n$.

Applying Cramer's rule to \eqref{banana12}, we have
\begin{equation}\label{banana14}
\beta_n(w,p)=\frac{\Lambda_n(w,p)}{\Lambda_n}=\frac{1}{\Lambda_n}\sum_{j=1}^k (-1)^{j+q(w,p)}\left( [F Q_m]_{n+j}-\eta(\rho,n,j)\right) M_n(j,q(w,p)),
\end{equation}
where  $\Lambda_n(w,p)$ is the determinant obtained from $\Lambda_n$ replacing the column with index $q(w,p):=(\sum_{l=0}^{w-1} k_{l})+p+1$ (where we define $k_0:=0$) with the column

$$[([F Q_m]_{n+1}-\eta(\rho,n,1))\,\,\,\quad \ldots \quad  \,\,\,([F Q_m]_{n+k}-\eta(\rho,n,k)) ]^{T}$$
and $M_n(j,q)$ is the $(j,q)^{\textup{th}}$ minor of $\Lambda_n(w,p).$ Substituting $\beta_n(w,p)$ in the formula (\ref{banana13}) with the expression in (\ref{banana14}) for $b=k+1$ , we obtain
\begin{align}\label{banana15}
&[F Q_m]_{n+k+1}=\eta(\rho,n,k+1)+ \\
&\frac{1}{\Lambda_n}\sum_{w=1}^{\gamma} \sum_{p=0}^{k_w-1}  \sum_{j=1}^k (-1)^{j+q(w,p)}\left( [F Q_m]_{n+j}-\eta(\rho,n,j)\right) M_n(j,q(w,p)) \left(\frac{s_{n+k+1}}{s_{n}} \right)^{(p)}(\tilde{\alpha}_w) \notag.
\end{align}

Let us transform the triple sum on the right side of the last expression
\begin{align}\label{banana150}
&\frac{1}{\Lambda_n}\sum_{w=1}^{\gamma} \sum_{p=0}^{k_w-1}  \sum_{j=1}^k (-1)^{j+q(w,p)}\left( [F Q_m]_{n+j}-\eta(\rho,n,j)\right) M_n(j,q(w,p)) \left(\frac{s_{n+k+1}}{s_{n}} \right)^{(p)}(\tilde{\alpha}_w)\notag\\
&=\frac{1}{\Lambda_n}     \sum_{j=1}^k\left( [F Q_m]_{n+j}-\eta(\rho,n,j)\right)      \sum_{w=1}^{\gamma} \sum_{p=0}^{k_w-1} (-1)^{j+q(w,p)}M_n(j,q(w,p))\left(\frac{s_{n+k+1}}{s_{n}} \right)^{(p)}(\tilde{\alpha}_w)\notag\\
&=\frac{1}{\Lambda_n} \sum_{j=1}^k\left( [F Q_m]_{n+j}-\eta(\rho,n,j)\right) \tilde{\Lambda}_n(j,k+1)\notag\\
&=\frac{1}{\Lambda_n} \sum_{j=1}^k [F Q_m]_{n+j}\tilde{\Lambda}_n(j,k+1)-\frac{1}{\Lambda_n} \sum_{j=1}^k\eta(\rho,n,j)  \tilde{\Lambda}_n(j,k+1),
\end{align} where we denote by $\tilde{\Lambda}_n(j,k+1)$ the determinant obtained from $\Lambda_n$ replacing the $j$th row by the row
$$
\begin{array}{c}
{ \begin{array}{c}
    {w=1,\ldots,\gamma}
\end{array}}\\[5pt]
\left[\left(\frac{s_{n+k+1}}{s_{n}}\right)(\tilde{\alpha}_w)\quad  \left(\frac{s_{n+k+1}}{s_{n}}\right)^{(1)}(\tilde{\alpha}_w)\quad  \quad\ldots \quad\quad \left(\frac{s_{n+k+1}}{s_{n}}\right)^{(k_w-1)}(\tilde{\alpha}_w)\right].
  \end{array}
$$
Define
$$a_{j,n}:=-\frac{\tilde{\Lambda}_n(j,k+1)}{\Lambda_n}, \quad \quad \textup{for $j=1,\ldots,k$}, \quad \quad \textup{and}\quad \quad  a_{k+1,n}:=1.$$
Therefore, we have
\begin{equation}\label{banana21}
\sum_{j=1}^{k+1} a_{j,n} [F Q_m]_{n+j}=\sum_{j=1}^{k+1} a_{j,n} \eta(\rho,n,j).
\end{equation}

Let us obtain some lower and upper bounds for $|a_{j,n}|,$ for $j=1,\ldots,k.$ Since
     $$\lim_{n \rightarrow \infty} |a_{1,n}|=\left|\frac{\begin{array}{c}
{ \begin{array}{c}
    {w=1,\ldots,\gamma}
\end{array}}\\[5pt]
\begin{vmatrix}
 R^2(\tilde{\alpha}_w) & (R^2)'(\tilde{\alpha}_w) &  \cdots &(R^2)^{(k_w-1)}(\tilde{\alpha}_w)  \\
   \vdots   & \vdots    & \vdots &  \vdots  \\
 R^k(\tilde{\alpha}_w) & (R^k)'(\tilde{\alpha}_w) &  \cdots &  (R^k)^{(k_w-1)}(\tilde{\alpha}_w)  \\
 R^{k+1}(\tilde{\alpha}_w) &  (R^{k+1})'(\tilde{\alpha}_w) &  \cdots & (R^{k+1})^{(k_w-1)}(\tilde{\alpha}_w)  \\
 \end{vmatrix}
  \end{array}}{\begin{array}{c}
{ \begin{array}{c}
    {w=1,\ldots,\gamma}
\end{array}}\\[5pt]
\begin{vmatrix}
 R(\tilde{\alpha}_w) & R'(\tilde{\alpha}_w) &  \cdots &R^{(k_w-1)}(\tilde{\alpha}_w)  \\
   \vdots   & \vdots    & \vdots &  \vdots  \\
 R^{k-1}(\tilde{\alpha}_w) & (R^{k-1})'(\tilde{\alpha}_w) &  \cdots &  (R^{k-1})^{(k_w-1)}(\tilde{\alpha}_w)  \\
 R^{k}(\tilde{\alpha}_w) &  (R^{k})'(\tilde{\alpha}_w) &  \cdots & (R^{k})^{(k_w-1)}(\tilde{\alpha}_w)  \\
 \end{vmatrix}
  \end{array}}\right|$$
  $$=\frac{\left|\prod_{w=1}^{\gamma} \left((k_w-1)!!  (\Phi'(\tilde{\alpha}_w))^{k_w(k_w-1)/2}  \Phi(\tilde{\alpha}_w)^{-k_w(k_w+1)}\right) \prod_{1\leq i< j\leq \gamma} \left(1/\Phi(\tilde{\alpha}_j)-1/\Phi(\tilde{\alpha}_i) \right)^{k_i k_j}\right|}{\left|\prod_{w=1}^{\gamma}  \left((k_w-1)!!  (\Phi'(\tilde{\alpha}_w))^{k_w(k_w-1)/2}  \Phi(\tilde{\alpha}_w)^{-k_w^2}\right) \prod_{1\leq i< j\leq \gamma} \left(1/\Phi(\tilde{\alpha}_j)-1/\Phi(\tilde{\alpha}_i) \right)^{k_i k_j}\right|}$$
  $$=\prod_{w=1}^{\gamma} |(\Phi(\tilde{\alpha}_w))^{-k_w}|\not=0,$$
it follows that  for $n$ sufficiently large,
 \begin{equation}\label{banana19}
 0<c_2\leq|a_{1,n}|\leq c_3.
 \end{equation}
Analogously, one obtains that
\begin{equation}\label{banana20}
|a_{j,n}|\leq c_4, \quad \quad j=2,3,\ldots,k.
\end{equation}
The inequalities \eqref{banana19} and \eqref{banana20} will be used later.

In \eqref{banana21}, we replace the index $n$ by the indices $n+\nu,\, \nu=0,1,\ldots, \nu_0-1,$ where $\nu_0$ is an arbitrary natural number  greater than $3k+1$. Then, we have
\begin{equation}\label{banana22}
 \sum_{j=1}^{k+1} a_{j,n+\nu} [F Q_m]_{n+\nu+j}=\sum_{j=1}^{k+1} a_{j,n+\nu} \eta(\rho,n+\nu,j), \quad \quad \nu=0,1,\ldots, \nu_0-1.
 \end{equation}
We rewrite the system of equations \eqref{banana22} in the following form
\begin{equation}\label{banana23}
\sum_{j=1}^{\min(\nu_0-\nu,k+1)} a_{j,n+\nu} [F Q_m]_{n+\nu+j}=B_{n+\nu}(\rho),\quad \quad \nu=0,1,\ldots, \nu_0-1,
\end{equation}
where
$$B_{n+\nu}(\rho):=\sum_{j=1}^{k+1} a_{j,n+\nu} \eta(\rho,n+\nu,j), \quad \quad \nu=0,1,\ldots,\nu_0-k-1,$$
$$B_{n+\nu}(\rho):=\sum_{j=1}^{k+1} a_{j,n+\nu} \eta(\rho,n+\nu,j)-\sum_{j=\nu_0-\nu+1}^{k+1} a_{j,n+\nu} [F Q_m]_{n+\nu+j}, \quad \nu=\nu_0-k,\ldots, \nu_0-1.$$
We view this as a system of $\nu_0$ equations on the $\nu_0$ unknowns $[F Q_m]_{n+1},\ldots, [F Q_m]_{n+\nu_0}.$ Notice that the matrix corresponding to the system is upper triangular and its determinant equals
$$\Lambda_n^*(\nu_0)=\prod_{\nu=0}^{\nu_0-1} a_{1, n+\nu}\not=0,$$
for all $n$ sufficiently large (see \eqref{banana19}). Therefore,
$[F Q_m]_{n+1}=\Lambda^*_n(\nu_0,1)/\Lambda^*_n(\nu_0),$
where $\Lambda^*_n(\nu_0,1)$ is the determinant obtained replacing the first column of the determinant of the system by the column
$[B_{n}(\rho)\,\,\, \cdots \,\,\, B_{n+\nu_0-1}(\rho)]^{T}.$ Expanding $\Lambda^*_n(\nu_0,1)$ by the first column, we get
$$\Lambda^*_n(\nu_0,1)=\sum_{\nu=0}^{\nu_0-1} (-1)^{\nu} B_{n+\nu} (\rho) D(n,\nu_0,\nu),$$
where $D(n,\nu_0,\nu)$ is the $(\nu+1,1)^{\textup{th}}$ minor of $\Lambda^*_n(\nu_0,1).$ Moreover, it is easy to check that
\begin{equation}\label{banana24}
D(n,\nu_0,\nu)=D(n,\nu+1,\nu) \prod_{j=\nu+1}^{\nu_0-1} a_{1,n+j},
\end{equation}
and we denote $D(n,\nu):=D(n,\nu+1,\nu).$ Therefore, by \eqref{banana24}, we have
\begin{equation}\label{banana30}
[Q_{m}F]_{n+1}=\sum_{\nu=0}^{\nu_0-1}(-1)^{\nu} B_{n+\nu}(\rho)\frac{D(n,\nu_0,\nu)}{\prod_{j=0}^{\nu_0-1} a_{1,n+j}}=\sum_{\nu=0}^{\nu_0-1}(-1)^{\nu} B_{n+\nu}(\rho)\frac{D(n,\nu)}{\prod_{j=0}^{\nu} a_{1,n+j}}.
\end{equation}

From \eqref{banana19} and \eqref{banana20}, we get
$$\left|\frac{D(n,\nu)}{\prod_{j=0}^{\nu} a_{1,n+j}}\right| \leq c_5, \quad \quad \nu=0,1,\ldots,3k.$$
Our next goal is to estimate
 $$\left|\frac{D(n,\nu)}{\prod_{j=0}^{\nu} a_{1,n+j}}\right|, \quad \quad \nu=3k+1,3k+2,\ldots,\nu_0-1.$$
For this purpose, we expand the determinant
 $D(n,\nu)$ along the first row. We have
 $$D(n,\nu)=\sum_{p=1}^k (-1)^{p+1} a_{p+1,n} M_{n}(\nu,1,p),$$
 where $M_{n}(\nu,1,p)$ is the $(1,p)^{\textup{th}}$ minor of $D(n,\nu).$
 It is easy to check that for $\nu\geq k+1,$
 $$M(\nu,1,p)=\frac{1}{a_{1,n}} \prod_{j=0}^{p-1}a_{1,n+j} D(n+p,\nu-p).$$
 Hence,
$$D(n,\nu)=\frac{1}{a_{1,n}} \sum_{p=1}^k (-1)^{p+1} a_{p+1,n} \prod_{j=0}^{p-1}a_{1,n+j} D(n+p,\nu-p).$$
Replacing $n$ by $n+r$ and $\nu$ by $\nu-r$, where $r=0,1,2,\ldots, \nu-2k,$ we obtain the following relations
$$a_{1,n+r}D(n+r,\nu-r)=\sum_{p=1}^k (-1)^{p+1} a_{p+1,n+r} \prod_{j=r}^{r+p-1}a_{1,n+j} D(n+r+p,\nu-r-p).$$
Dividing both sides  by $\prod_{j=r}^{\nu-1}a_{1,n+j},$
we get
\begin{equation} \label{eq:*}
a_{1,n+r}\frac{D(n+r,\nu-r)}{\prod_{j=r}^{\nu-1} a_{1,n+j}}=\sum_{p=1}^k (-1)^{p+1} a_{p+1,n+r}\frac{D(n+r+p,\nu-r-p)}{\prod_{j=r+p}^{\nu-1}a_{1,n+j}}.
\end{equation}
For fixed $n$ and $\nu,$ the quantity
$$V_{\nu-(r+p)}:=(-1)^{r+p} \frac{D(n+p+r,\nu-r-p)}{\prod_{j=r+p}^{\nu-1}a_{1,n+j}}$$ depends only on the sum $r+p.$ With this notation \eqref{eq:*} can be rewritten as
$$\sum_{p=0}^{k} a_{p+1,n+r} V_{\nu-(r+p)}=0,\quad \quad r=0,1,\ldots, \nu-2k.$$
 Setting $\tilde{\Lambda}_{n+r}(k+1,k+1)=-\Lambda_{n+r},$ we bring these relations to the form
 $$\sum_{p=0}^{k} \left( -\frac{\tilde{\Lambda}_{n+r}(p+1,k+1)}{\Lambda_{n+r}}\right)V_{\nu-(r+p)}=0$$
or, what is the same,
\begin{equation}\label{banana61}
 \sum_{p=0}^{k} \tilde{\Lambda}_{n+r}(p+1,k+1)V_{\nu-(r+p)}=0, \quad \quad r=0,1,\ldots, \nu-2k.
 \end{equation}

Let us show that the equations \eqref{banana61} are equivalent to
\begin{equation}\label{banana25}
\begin{array}{c}
{ \begin{array}{c}
    {w=1,\ldots,\gamma}
\end{array}}\\[5pt]
\begin{vmatrix}
  \left(\frac{s_{n+r+1}}{s_{n+r}}\right)(\tilde{\alpha}_w) &  \left(\frac{s_{n+r+1}}{s_{n+r}}\right)'(\tilde{\alpha}_w) &  \cdots & \left(\frac{s_{n+r+1}}{s_{n+r}}\right)^{(k_w-1)}(\tilde{\alpha}_w)  &  V_{\nu-r}\\
  \left(\frac{s_{n+r+2}}{s_{n+r}}\right)(\tilde{\alpha}_w) &  \left(\frac{s_{n+r+2}}{s_{n+r}}\right)'(\tilde{\alpha}_w) &  \cdots &\left(\frac{s_{n+r+2}}{s_{n+r}}\right)^{(k_w-1)}(\tilde{\alpha}_w) &  V_{\nu-r-1}\\
   \vdots   & \vdots    & \vdots &  \vdots &  \vdots\\
  \left(\frac{s_{n+r+k+1}}{s_{n+r}}\right)(\tilde{\alpha}_w) & \left(\frac{s_{n+r+k+1}}{s_{n+r}}\right)'(\tilde{\alpha}_w) &  \cdots &  \left(\frac{s_{n+r+k+1}}{s_{n+r}}\right)^{(k_w-1)}(\tilde{\alpha}_w) & V_{\nu-(r+k)} \\
 \end{vmatrix}
  \end{array}=0,
  \end{equation}
for $r=0,1,\ldots,\nu-2k$ (this expression represents the determinant of order $k+1$ in which the indicated group of columns, evaluated at $\tilde{\alpha}_w$, are successively written out for $w=1,\ldots,\gamma$ and the last column is $[V_{\nu-r} \,\, V_{\nu-r-1} \quad  \cdots \quad  V_{\nu-(r+k)} ]^{T}$). In fact,   expanding \eqref{banana25} along the last column,
we have
$$\sum_{q=1}^{k+1}(-1)^{k+q+1} \tilde{M}_n^r(q,k+1)V_{\nu-(r+q-1)}=0,$$
where  $\tilde{M}_n^r(q,k+1)$ is the $(q,k+1)^{\textup{th}}$ minor of the determinant in \eqref{banana25}. Moreover, it is easy to check that
$$\tilde{M}_n^r(q,k+1)=(-1)^{k-q}\tilde{\Lambda}_{n+r}(q,k+1).$$ Therefore, setting $q=p+1$ in \eqref{banana61}, we obtain
$$0=-\sum_{q=1}^{k+1}\tilde{\Lambda}_{n+r}(q,k+1)V_{\nu-(r+q-1)}=\sum_{q=1}^{k+1}(-1)^{k+q+1} (-1)^{k-q}\tilde{\Lambda}_{n+r}(q,k+1)V_{\nu-(r+q-1)}$$
$$=\sum_{q=1}^{k+1}(-1)^{k+q+1} \tilde{M}_n^r(q,k+1)V_{\nu-(r+q-1)}$$
as we needed to show.

Let us transform \eqref{banana25} further. By the Leibniz rule, we have that for all $j\geq 1$ and $p\geq 0$,
\[ \left(\frac{s_{n+r+j}}{s_{n+\nu+1}}\right)^{(p)} = \sum_{i=0}^p { p \choose i} \left(\frac{s_{n+r}}{s_{n+\nu+1}}\right)^{(p-i)} \left(\frac{s_{n+r+j}}{s_{n+r}}\right)^{(i)}.
\]
Notice that the factors of $\left(\frac{s_{n+r+j}}{s_{n+r}}\right)^{(i)}$ do not depend on $j$. Consequently, taking column operations on the determinant in \eqref{banana25} and having in mind that the determinant equals zero, we obtain that the system \eqref{banana25} is equivalent to
\begin{equation}\label{banana26}
\begin{array}{c}
{ \begin{array}{c}
    {w=1,\ldots,\gamma}
\end{array}}\\[5pt]
\begin{vmatrix}
  \left(\frac{s_{n+r+1}}{s_{n+\nu+1}}\right)(\tilde{\alpha}_w) &  \left(\frac{s_{n+r+1}}{s_{n+\nu+1}}\right)'(\tilde{\alpha}_w) &  \cdots & \left(\frac{s_{n+r+1}}{s_{n+\nu+1}}\right)^{(k_w-1)}(\tilde{\alpha}_w)  &  V_{\nu-r}\\
  \left(\frac{s_{n+r+2}}{s_{n+\nu+1}}\right)(\tilde{\alpha}_w) &  \left(\frac{s_{n+r+2}}{s_{n+\nu+1}}\right)'(\tilde{\alpha}_w) &  \cdots &\left(\frac{s_{n+r+2}}{s_{n+\nu+1}}\right)^{(k_w-1)}(\tilde{\alpha}_w) &  V_{\nu-r-1}\\
   \vdots   & \vdots    & \vdots &  \vdots &  \vdots\\
  \left(\frac{s_{n+r+k+1}}{s_{n+\nu+1}}\right)(\tilde{\alpha}_w) & \left(\frac{s_{n+r+k+1}}{s_{n+\nu+1}}\right)'(\tilde{\alpha}_w) &  \cdots &  \left(\frac{s_{n+r+k+1}}{s_{n+\nu+1}}\right)^{(k_w-1)}(\tilde{\alpha}_w) & V_{\nu-(r+k)} \\
 \end{vmatrix}
  \end{array}=0,
  \end{equation}
  for $r=0,\ldots,\nu-2k$.



We consider \eqref{banana26} as a linear system of $\nu-2k+1$ equations with $\nu-k+1$ unknowns $V_k,\ldots, V_\nu.$
The rank of this system is $\nu-2k+1$ for $n$ sufficiently large. Thus, the null space  has dimension $k.$ Therefore, every solution of \eqref{banana26} can be written as a unique linear combination of $k$ linearly independent solutions $W_1(n),\ldots,W_k(n).$ The structure of \eqref{banana26} easily reveals that for each $w=1,\ldots,\gamma$ and $p= 0,\ldots,k_w-1$
$$W_{j(w,p)}(n) =\left[\left(\frac{s_{n+h+1}}{s_{n+\nu+1}}(\tilde{\alpha}_w)\right)^{(p)}\right]^T_{h=0,\ldots,\nu-k},\quad j(w,p)=\sum_{l=0}^{w-1}k_l+p+1, \quad k_0=0,$$ is a solution of the homogeneous linear system of equations \eqref{banana26}. Moreover, they are linearly independent (for all sufficiently large $n$) because using \eqref{banana8}
\begin{align}\label{banana28}
 &\begin{array}{c}
{ \begin{array}{c}
    {w=1,\ldots,\gamma}
\end{array}}\\[5pt]
\lim_{n\rightarrow \infty} \begin{vmatrix}
 \frac{s_{n+\nu-2k+2}}{s_{n+\nu+1}}(\tilde{\alpha}_w) &\left(\frac{s_{n+\nu-2k+2}}{s_{n+\nu+1}}\right)'(\tilde{\alpha}_w)&  \ldots &\left(\frac{s_{n+\nu-2k+2}}{s_{n+\nu+1}}\right)^{(k_w)}(\tilde{\alpha}_w)\\
  \frac{s_{n+\nu-2k+3}}{s_{n+\nu+1}}(\tilde{\alpha}_w) &\left(\frac{s_{n+\nu-2k+3}}{s_{n+\nu+1}}\right)'(\tilde{\alpha}_w)&  \ldots &\left(\frac{s_{n+\nu-2k+3}}{s_{n+\nu+1}}\right)^{(k_w)}(\tilde{\alpha}_w)\\
\vdots  & \vdots  & \cdots & \vdots\\
 \frac{s_{n+\nu-k+1}}{s_{n+\nu+1}}(\tilde{\alpha}_w)   & \left(\frac{s_{n+\nu-k+1}}{s_{n+\nu+1}}\right)'(\tilde{\alpha}_w)    & \ldots &  \left(\frac{s_{n+\nu-k+1}}{s_{n+\nu+1}}\right)^{(k_w)}(\tilde{\alpha}_w) \\
 \end{vmatrix}
  \end{array}\notag\\
 & = (-1)^{(k-1)k/2}\prod_{w=1}^{\gamma}  (k_w-1)!! (\Phi^{k_wk})(\tilde{\alpha_w})(\Phi'(\tilde{\alpha_w}))^{\frac{(k_w-1)k_w}{2}}  \prod_{1\leq i< j \leq \gamma}\left(\Phi(\tilde{\alpha_j})-\Phi(\tilde{\alpha}_i)\right)^{k_ik_j} \notag\\
 &\not=0.
  \end{align}

Since $$V_{\nu-(r+p)}:=(-1)^{r+p} \frac{D(n+p+r,\nu-r-p)}{\prod_{j=r+p}^{\nu-1}a_{1,n+j}},$$
there exists a unique set of coefficients $C_1(n),\ldots,C_k(n$) such that
\[
\left[(-1)^{h} \frac{D(n+h,\nu-h)}{\prod_{j=h}^{\nu-1}a_{1,n+j}} \right]_{h=0,\ldots, \nu-k}=\sum_{j=1}^k C_j(n) W_j(n).
\]
Thus,
\begin{equation}\label{banana27}
 \sum_{w=1}^{\gamma}\sum_{p=0}^{k_w-1} {c}_{n,\nu}(w,p)\left(\frac{s_{n+h+1}}{s_{n+\nu+1}}\right)^{(p)}(\tilde{\alpha}_w)=(-1)^{h} \frac{D(n+h,\nu-h)}{\prod_{\tau=h}^{\nu-1}a_{1,n+\tau}}, \quad \quad h=0,\ldots, \nu-k,
\end{equation}
where the constants ${c}_{n,\nu}(w,p)$ are uniquely determined.

To estimate the ${c}_{n,\nu}(w,p), w=1,\ldots,\gamma, p=0,\ldots,k_w-1$, we use the linear system of equations
\begin{equation}\label{banana66}
 \sum_{w=1}^{\gamma}\sum_{p=0}^{k_w-1} {c}_{n,\nu}(w,p)\left(\frac{s_{n+h+1}}{s_{n+\nu+1}}\right)^{(p)}(\tilde{\alpha}_w)=(-1)^{h} \frac{D(n+h,\nu-h)}{\prod_{\tau=h}^{\nu-1}a_{1,n+\tau}},
\end{equation}
corresponding to $ h=\nu-2k+1,\ldots, \nu-k$. From \eqref{banana28}, it follows that the determinant of this system is different from zero for all sufficiently large $n$. From \eqref{banana19} and \eqref{banana20}, it is not difficult to verify that
\begin{equation}\label{banana63}
\left|\frac{D(n+h,\nu-h)}{\prod_{\tau=h}^{\nu-1} a_{1,n+\tau}} \right| \leq c_6, \quad h=\nu-2k+1,\ldots,\nu-k,
\end{equation}
From \eqref{banana8} and the Weierstrass theorem we have that
\begin{equation} \label{c}
\lim_{n \to \infty} \left(\frac{s_{n+h+1}(z)}{s_{n+\nu+1}(z)}\right)^{(p)} = \left(\Phi^{\nu-h}(z)\right)^{(p)}
\end{equation}
uniformly inside $\overline{\mathbb{C}} \setminus E$. Therefore, the coefficients of system \eqref{banana66} remain uniformly bounded with respect to $n$ or $\nu$ since in those equations $k \leq \nu-h \leq 2k -1$. Applying Cramer's rule and \eqref{banana63}, it follows that
\begin{equation} \label{a}
|{c}_{n,\nu}(w,p)|\leq c_7, \quad \quad w=1,\ldots,\gamma, \quad p=0,\ldots,k_w-1,
\end{equation}
where $c_7$ does not depend on $n$ or $\nu$.
Taking $h=0$ in \eqref{banana27}, we have
\begin{equation} \label{b}
 \frac{D(n,\nu)}{\prod_{\tau=0}^{\nu-1}a_{1,n+\tau}}= \sum_{w=1}^{\gamma}\sum_{p=0}^{k_w-1} {c}_{n,\nu}(w,p)\left(\frac{s_{n+1}}{s_{n+\nu+1}}\right)^{(p)}(\tilde{\alpha}_w)
\end{equation}
From \eqref{a}, \eqref{b}, and \eqref{c} with $h=0$ it follows for any $\varepsilon > 0$ there exists $n_0$ such that for $n \geq n_0$
\begin{equation} \label{d}
\left|\frac{D(n,\nu)}{\prod_{\tau=0}^{\nu-1} a_{1,n+\tau}} \right| \leq c_{8}  ({\sigma_\gamma+\varepsilon})^{\nu}, \qquad \nu =0,\ldots,\nu_0 -1.\end{equation}
(Notice that using Cauchy's integral formula it is easy to prove that $|\left(\Phi^{\nu}(\widetilde{\alpha}_w)\right)^{(p)}| \leq c_9| \Phi^{\nu}(\sigma_\gamma + \varepsilon) |$). Now, \eqref{banana19}, \eqref{banana30}, and \eqref{d} give
\begin{align}\label{banana31}
|[Q_mF]_{n+1}|& \leq \sum_{\nu=0}^{\nu_0-1} |B_{n+\nu}(\rho)| \left|\frac{D(n,\nu)}{\prod_{\tau=0}^{\nu-1}a_{1,n+\tau}} \right|\frac{1}{|a_{1,n+\nu}|}\notag\\
&\leq c_{10} \sum_{\nu=0}^{\nu_0-1} |B_{n+\nu}(\rho)| (\sigma_\gamma+\varepsilon)^{\nu}.
\end{align}

Next, let us bound $|B_{n+\nu}(\rho)|.$ Take $\varepsilon >0$ such that $\sigma_\gamma + \varepsilon < \rho - \varepsilon$ and $\delta < \delta' <1$. From \eqref{banana51}, \eqref{banana10}, and \eqref{e}, we have for all sufficiently large $n$
\[
|\eta(\rho,n,j)|\leq c_{11} (\delta')^n \frac{1}{(\rho-\varepsilon)^{n+j}},
\]
and
\[
|[Q_mF]_{n+\nu+j}| \leq \frac{c_{12}}{(\rho-\varepsilon)^{n+\nu+j}}.
\]
Thus, from \eqref{banana19}, \eqref{banana20} and the definition of $B_{n+\nu}(\rho)$ we obtain
\begin{align}\label{banana34}
&|B_{n+\nu}(\rho)| \leq c_{13} \left(\frac{\delta'}{\rho-\varepsilon}\right)^{n+\nu}  \sum_{j=1}^{k+1} \frac{1}{(\rho-\varepsilon)^{j}}\notag\\
&=c_{14} \left(\frac{\delta'}{\rho-\varepsilon}\right)^{n}\left(\frac{1}{\rho-\varepsilon}\right)^{\nu}  ,\quad \quad \nu=0,1,\ldots,\nu_0-k-1,
\end{align}
and
\begin{align}\label{banana35}
&|B_{n+\nu}(\rho)|\leq c_{14} \left(\frac{\delta'}{\rho-\varepsilon}\right)^{n}\left(\frac{1}{\rho-\varepsilon}\right)^{\nu} + \sum_{j=\nu_0-\nu+1}^{k+1} \frac{c_{15}}{(\rho-\varepsilon)^{n+\nu+j}}\notag\\
&\leq c_{14} \left(\frac{\delta'}{\rho-\varepsilon}\right)^{n}\left(\frac{1}{\rho-\varepsilon}\right)^{\nu}+c_{16}\frac{1}{(\rho-\varepsilon)^{n+\nu}}\notag\\
&\leq \frac{c_{17}}{(\rho-\varepsilon)^{n+\nu}}, \quad \quad \quad\quad \quad \quad \quad \quad \quad\quad  \nu=\nu_0-k,\ldots,\nu_0-1.
\end{align}
Applying \eqref{banana34} and \eqref{banana35} to \eqref{banana31}, we have
$$|[Q_mF]_{n+1}|\leq c_{18}  \left( \left(\frac{\delta'}{\rho-\varepsilon} \right)^n \sum_{\nu=0}^{\nu_0-k-1} \left(\frac{\sigma_\gamma+\varepsilon}{\rho-\varepsilon} \right)^{\nu}+\frac{1}{(\rho-\varepsilon)^n}\sum_{\nu=\nu_0-k}^{\nu_0-1} \left( \frac{\sigma_\gamma+\varepsilon}{\rho-\varepsilon}\right)^{\nu}\right)$$
Setting $\theta=(\sigma_\gamma+\varepsilon)/(\rho-\varepsilon)<1,$ we find that
$$|[Q_mF]_{n+1}|\leq c_{19}  \left(  \left(\frac{\delta'}{\rho-\varepsilon} \right)^{n+1} \sum_{\nu=0}^{\infty} \theta^{\nu} +\frac{1}{(\rho-\varepsilon)^n}\sum_{\nu=\nu_0-k}^{\infty} \theta^{\nu} \right) $$
Letting $\nu_0 \rightarrow \infty,$ we obtain
$$|[Q_mF]_{n+1}|\leq c_{20}  \left(\frac{\delta'}{\rho-\varepsilon} \right)^{n+1},$$
and
$$\varlimsup_{n \rightarrow \infty}|[Q_mF]_{n+1}|^{1/(n+1)}\leq  \frac{\delta'}{\rho-\varepsilon}.$$
Making $\varepsilon \rightarrow 0,$ $\delta'\rightarrow \delta,$ and $\rho \rightarrow \rho_m(F),$ we obtain the claim that
$$\varlimsup_{n \rightarrow \infty}|[Q_mF]_{n+1}|^{1/(n+1)}\leq  \frac{\delta}{\rho_m(F)}.$$ From this and \eqref{banana9}, if follows that the function $Q_{m}F$ is holomorphic in $D_{\rho_m(F)/\delta}$. Thus $F$ is meromorphic with at most $m$ poles on $D_{\rho_m(F)/\delta}$ which contradicts the definition of $\rho_m(F)$ unless $\rho_m(F) = \infty$.

In the final step, we show that if $F$ is meromorphic in $\mathbb{C}$ and has $k < m$ poles, then $F$ is a rational function.  In fact, in that case
$$F:=F^{*}+R_k,$$ where $F^{*}$ is an entire function and $R_k$ is a rational function with $k$ poles at $\lambda_1,\ldots,\lambda_k.$
Applying the residue theorem and  arguing as in \eqref{banana40}, we obtain
 $$[R_k Q_{n,m}^{\mu}]_{n+b}=\frac{1}{2\pi i} \int_{\Gamma_\rho} R_k(t) Q_{n,m}^{\mu}(t) s_{n+b}(t) dt -\sum_{w=1}^{\gamma} \textup{res}(R_k Q_{n,m}^{\mu}s_{n+b},\tilde{\alpha}_w)$$
 \begin{equation}\label{banana70}
 =\frac{1}{2\pi i} \int_{\Gamma_\rho} R_k(t) Q_{n,m}^{\mu}(t) s_{n+b}(t) dt -\sum_{w=1}^{\gamma}\sum_{p=0}^{k_w-1}\xi_n(w,p) \left(\frac{s_{n+b}}{s_{n+1}} \right)^{(p)}(\tilde{\alpha_w}),
 \end{equation}
where $\rho>\sigma_\gamma$  and
$$
\xi_n(w,p)=\frac{1}{(k_w-1)!}{k_w-1 \choose p} \lim_{z \rightarrow \tilde{\alpha}_w} ((z-\tilde{\alpha}_w)^{k_w}R_k(z) Q_{n,m}^{\mu}(z)  s_{n+1}(z))^{(k_w-1-p)}.
$$
Since $s_{n+b}$ has a zero of order $n+b+1$ at infinity and $\deg{(Q_{n,m}^{\mu})}\leq m,$ for $n$ sufficiently large, we have
\begin{equation}\label{banana71}
\frac{1}{2\pi i} \int_{\Gamma_\rho} R_k(t) Q_{n,m}^{\mu}(t) s_{n+b}(t) dt=0.
\end{equation}

By the definition of Pad\'e-orthogonal approximants,
$$0=[FQ_{n,m}^{\mu}]_{n+b}=[F^{*}Q_{n,m}^{\mu}]_{n+b}+[R_kQ_{n,m}^{\mu}]_{n+b},\quad \quad b=1,\ldots,m.$$
Since $k+1 \leq m$, using \eqref{banana70} and \eqref{banana71}, we have
$$[F^{*}Q_{n,m}^{\mu}]_{n+b}=\sum_{w=1}^{\gamma}\sum_{p=0}^{k_w-1}\xi_n(w,p) \left(\frac{s_{n+b}}{s_{n+1}} \right)^{(p)}(\tilde{\alpha_w}),\quad \quad b=1,\ldots,k+1.$$

Arguing as above in the deduction of \eqref{banana12}-\eqref{banana150}, we obtain
$$[F^*Q_{n,m}^{\mu}]_{n+1}=\sum_{j=2}^{k+1} a_{j,n} [F^{*}Q_{n,m}^{\mu}]_{n+j},$$
 where $a_{j,n}:=\Lambda_{n+1}(j-1,1)/\Lambda_{n+1}$, $\Lambda_{n+1}$ is matrix \eqref{matrixL} with $n$ replaced by $n+1$,
 and $\Lambda_{n+1}(j-1,1)$ is the determinant obtained from $\Lambda_{n+1}$ replacing row $j-1$  by the row
 $$
\begin{array}{c}
{ \begin{array}{c}
    {w=1,\ldots,\gamma}
\end{array}}\\[5pt]
\left[1\quad  0\quad 0\quad 0\quad  \quad\ldots \quad\quad 0\right].
  \end{array}
$$
It is easy to verify that $|a_{j,n}| \leq c_{21},$ for all $j=2,\ldots,k+1.$ Therefore,
\begin{equation}\label{banana72}
|[F^* Q_{n,m}^{\mu}]_{n+1}|\leq c_{21} \sum_{j=2}^{k+1}  |[F^* Q_{n,m}^{\mu}]_{n+j}|.
\end{equation}

Let $$Q_{n,m}^{\mu}(z):=z^m+\sum_{j=0}^{m-1} q_{n,j} z^j$$
and $$F^{*}(z):=\sum_{\nu=0}^{\infty} F^{*}_\nu p_{\nu}(z),$$
where $F^{*}_\nu:=\langle F^{*}, p_\nu \rangle_{\mu}.$
Note that the series $\sum_{\nu=0}^{\infty} F_\nu^{*} p_\nu$ converges to $F^{*}$ uniformly inside $\mathbb{C}$ and $\lim_{\nu\rightarrow \infty} |F^{*}_{\nu}|^{1/\nu}=0$ because $F^{*}$ is an entire function. Therefore, for all $b=1,\ldots,k+1,$
$$[ Q_{n,m}^{\mu} F^{*}]_{n+b}=\langle  Q_{n,m}^{\mu} F^{*}, p_{n+b} \rangle_{\mu}=\langle  z^{m} F^{*}, p_{n+b}\rangle_{\mu}+\sum_{j=0}^{m-1} q_{n,j} \langle  z^{j} F^{*}, p_{n+b}\rangle_{\mu}$$
$$=\sum_{\nu=0}^{\infty} F^{*}_\nu  \langle z^m p_{\nu} ,p_{n+b} \rangle_{\mu}+\sum_{j=0}^{m-1} q_{n,j} \sum_{\nu=0}^{\infty} F^{*}_{\nu} \langle z^{j}p_{\nu} , p_{n+b}\rangle_{\mu}$$
$$=\sum_{\nu=n+b-m}^{\infty} F^{*}_\nu  \langle z^m p_{\nu} ,p_{n+b} \rangle_{\mu}+\sum_{j=0}^{m-1} q_{n,j} \sum_{\nu=n+b-j}^{\infty} F^{*}_{\nu} \langle z^{j}p_{\nu} , p_{n+b}\rangle_{\mu}.$$
By the Cauchy-Schwarz inequality and the orthonormality of $p_\nu$, for all $n,\nu,b\in \mathbb{N}$ and $j=1,\ldots,m,$
$$|\langle z^{j}p_{\nu} , p_{n+b}\rangle_{\mu}| \leq c_{22}.$$ Using \eqref{banana51}, it follows that $|q_{n,j}| \leq c_{23}$ and therefore
\begin{equation}\label{banana73}
|[Q_{n,m}^{\mu} F^{*} ]_{n+b}| \leq c_{24} \sum_{\nu=n+b-m}^{\infty} |F^{*}_\nu| \leq c_{24} \sum_{\nu=n+2-m}^{\infty} |F^{*}_\nu|, \quad \quad b=2,\ldots k+1.
\end{equation}
Moreover,
$$[ Q_{n,m}^{\mu} F^{*}]_{n+1}=\sum_{\nu=n+1-m}^{\infty} F^{*}_\nu  \langle z^m p_{\nu} ,p_{n+1} \rangle_{\mu}+\sum_{j=0}^{m-1} q_{n,j} \sum_{\nu=n+1-j}^{\infty} F^{*}_{\nu} \langle z^{j}p_{\nu} , p_{n+1}\rangle_{\mu}.$$
\begin{equation}\label{banana74}
=\frac{\kappa_{n+1-m}}{\kappa_{n+1}}F^{*}_{n+1-m}+\sum_{\nu=n+2-m}^{\infty} F^{*}_\nu  \langle z^m p_{\nu} ,p_{n+1} \rangle_{\mu}+\sum_{j=0}^{m-1} q_{n,j} \sum_{\nu=n+1-j}^{\infty} F^{*}_{\nu} \langle z^{j}p_{\nu} , p_{n+1}\rangle_{\mu}.
\end{equation}
Combining \eqref{banana72}, \eqref{banana73}, and \eqref{banana74}, we have
$$\frac{\kappa_{n+1-m}}{\kappa_{n+1}}|F^{*}_{n+1-m}| \leq c_{25}\sum_{\nu=n+2-m}^{\infty} |F^{*}_\nu|.$$ By Lemma \ref{kappa},
$$\lim_{n \rightarrow \infty} \frac{\kappa_{n+1-m}}{\kappa_{n+1}}={\textup{cap}(E)^{m}}>0;$$ therefore, there exists $n_1\geq 1$ such that for all $n \geq n_1,$
$$\frac{\kappa_{n+1-m}}{\kappa_{n+1}}\geq c_{26}>0.$$ Setting $N=n+1-m,$ we obtain
$$|F_N^{*}|\leq c_{27} \sum_{\nu=N+1}^{\infty} |F_N^{*}|, \quad N \geq N_0.$$
By Lemma \ref{trick}, there exist $N_1\in \mathbb{N}$ such that $F_N^{*}=0$ for all $N \geq N_1.$ Therefore, $F^*$ is a polynomial and $F$ is a rational function with at most $k$ poles. However, in this case it is easy to see from \eqref{averbvkaer} that under appropriate column operations $\Delta_{n,m}(F, \mu)=0$ for all $n$ sufficiently large. This contradicts the  assumption that for all $n$ sufficiently large, $\deg(Q_{n,m}^{\mu})=m.$ Consequently, $F$ has $m$ poles in $D_{\rho_m(F)}.$

By Theorem A, we conclude that $\lambda_1,\ldots, \lambda_m$ are poles of $F$ in $D_{\rho_m(F)}.$ To prove \eqref{banana75}, let us consider the region $D_{\rho_{m-1}(F)}.$ Notice that $\rho_{m-1}(F):=\max_{j=1,\ldots,m} |\Phi(\lambda_j)|.$ Clearly, $F$ has less than $m$ poles in $D_{\rho_{m-1}(F)}.$ Repeating the proof above we obtain that $\varlimsup_{n \rightarrow \infty} [Q_{m}F]_n^{1/n}\leq \delta/\rho_{m-1}(F)$. This implies that $F$ is meromorphic with at most $m$ poles in $D_{\rho_{m-1}(F)/\delta}.$ From the definition of $\rho_m(F)$ this implies that  $$\rho_m(F) \geq \frac{1}{\delta}{\max_{j=1\ldots,m}|\Phi(\lambda_j)|}.$$ This completes the proof.
\end{proof}

\noindent Nattapong Bosuwan\\
         Department of Mathematics\\
         Faculty of Science\\
         Mahidol University\\
         Rama VI Road, Ratchathewi District,\\
         Bangkok 10400, Thailand\\
         \noindent email: nattapong.bos@mahidol.ac.th,\\

\noindent G. L\'opez Lagomasino\\
Departamento de Matem\'aticas \\
Universidad Carlos III de Madrid \\
c/ Avda. de la Universidad, 30\\
28911, Legan\'es, Spain\\
\noindent email: lago@math.uc3m.es
}
\end{document}